\documentclass[11pt]{amsart}
\usepackage[T1]{fontenc}
\usepackage[margin=1in]{geometry}
\usepackage{amscd,setspace,amssymb,amsopn,mathrsfs,lmodern,graphics,amsfonts,enumerate,verbatim,calc,times} 
\usepackage[dvipsnames]{xcolor}
\definecolor{RoyalBlue}{RGB}{0, 35, 102}

\usepackage[backend=biber, style=alphabetic, sorting=anyt]{biblatex}
\usepackage[bookmarks,colorlinks=true, citecolor=RoyalBlue, filecolor=RoyalBlue,linkcolor=RoyalBlue,urlcolor = RoyalBlue]{hyperref}
\usepackage{xurl}
\hypersetup{breaklinks=true}

\usepackage{amsmath}
\usepackage{amssymb}
\usepackage{amsthm}
\usepackage{stmaryrd}
\usepackage{mathtools}
\usepackage{mathrsfs}
\usepackage{esdiff}
\usepackage{graphicx}
\usepackage{comment}
\usepackage{float}

\addbibresource{references.bib}

\usepackage{cleveref}

\usepackage{tikz}
\usepackage{tikz-cd}
\usepackage{pgfplots}
\pgfplotsset{compat=1.17}
\usetikzlibrary{positioning}
\usetikzlibrary{external, automata, arrows}
\tikzexternalize[prefix=tikz/]
\tikzexternalize
\tikzexternaldisable

\tikzset{
    labl/.style={anchor=south, rotate=270, inner sep=.5mm}
}

\definecolor{color1}{RGB}{250, 150, 150}
\definecolor{color2}{RGB}{150, 150, 250}

\theoremstyle{definition}
\newtheorem{theorem}{Theorem}[section]
\newtheorem*{theorem*}{Theorem}

\newtheorem*{maintheorem*}{Main Theorem}
\newtheorem{definition}[theorem]{Definition}

\newtheorem{lemma}[theorem]{Lemma}
\newtheorem{corollary}[theorem]{Corollary}

\newtheorem{claim}[theorem]{Claim}

\newenvironment{subproof}[1]
{
  
  \begin{proof}#1
}
{
\end{proof}

}  

\usepackage{ upgreek }

\newcommand*{\QQ}{\mathbb{Q}}
\newcommand*{\ZZ}{\mathbb{Z}}
\newcommand*{\NN}{\mathbb{N}}
\newcommand*{\FF}{\mathbb{F}}

\newcommand*{\hO}{\mathcal{O}}

\usepackage{enumitem}
\setlist[enumerate]{label=(\roman*),ref=(\roman*)}

\newcommand*{\st}{\ \vert\ }
\newcommand*{\blank}{{-}}
\newcommand*{\nth}{^{\textrm{th}}}

\DeclareMathOperator{\characteristic}{char}

\DeclareMathOperator{\Span}{Span}

\makeatletter
\title[Tight Closure in Line--\texorpdfstring{$S_4$}{} Quartics]{On Localization of Tight Closure in Line--\texorpdfstring{$S_4$}{} Quartics}

\author[Borevitz]{Levi Borevitz}
\address{Department of Mathematics, Williams College, Williamstown, MA 01267}
\email{\href{mailto:lyb1@williams.edu}{lyb1@williams.edu}}

\author[Nader]{Naima Nader}
\address{Mathematics and Statistics Department, Vassar College, Poughkeepsie, NY 12604}
\email{\href{mailto:nnader@vassar.edu}{nnader@vassar.edu}}

\author[Sandstrom]{Theodore J. Sandstrom}
\address{Department of Mathematics, Yale University, PO Box 208283, New Haven, CT 06520}
\email{\href{mailto:theo.sandstrom@yale.edu}{theo.sandstrom@yale.edu}}

\author[Shapiro]{Amelia Shapiro}
\address{Department of Mathematics, Brown University, Providence, RI 02912}
\email{\href{mailto:amelia\textunderscore shapiro@brown.edu}{amelia\textunderscore shapiro@brown.edu}}

\author[Simpson]{Austyn Simpson}
\address{Department of Mathematics, University of Michigan, Ann Arbor, MI 48109}
\thanks{Simpson was supported by NSF grant DMS \#2202890.}
\email{\href{mailto:austyn@umich.edu}{austyn@umich.edu}}

\author[Zomback]{Jenna Zomback}
\address{Department of Mathematics, Williams College, Williamstown, MA 01267}
\email{\href{mailto:jz8@williams.edu}{jz8@williams.edu}}

\begin{document}

\begin{abstract}
Building on work of Brenner and Monsky from 2010 and on a Hilbert--Kunz calculation of Monsky from 1998, we exhibit a novel example of a hypersurface over $\overline{\FF_2}$ in which tight closure does not commute with localization. Our methods involve a surprising tiling argument using Sierpi\'{n}ski triangles, as well as an inspection of a certain dynamical system in characteristic two.

\end{abstract}

\maketitle

\section{Introduction}
\label{sec:introduction}

Let $R$ be a noetherian domain of prime characteristic $p > 0$ and let $I \subseteq R$ be an ideal. An element $r \in R$ is in the \emph{tight closure of $I$}, denoted $I^*$, if there exists $0 \neq c \in R$ such that $c r^{p^e} \in I^{[p^e]} := \langle x^{p^e} \mid x \in I \rangle$ for all $e \geq 1$. This operation was introduced by Hochster and Huneke \cite{HH88,HH90} as a refinement of integral closure in prime characteristic and has since garnered immense interest among commutative algebraists and algebraic geometers. The theory initially emerged as a tool to unify and simplify existing Frobenius techniques, but in the subsequent decades it has been used to build a robust bridge via reduction mod $p$ to singularities in complex birational geometry (see e.g. \cite{HW02,Sch09,Smi97}).

Due to both the geometric impact of this closure theory and also the push to streamline ideas at the core of the subject, it was desirable for the tight closure operation $(\blank)^*$ to commute with localizing at a multiplicative subset $S\subseteq R$. This vexing question remained open for nearly two decades, and it was an active program to search for partial positive results (see \cite[Chapter 12]{Hun96} for a survey). However, Brenner and Monsky discovered the following surprising counterexample.
\begin{theorem}\cite[Theorem 2.5]{BM10}
Let 
\[ A = \frac{F[x,y,z,t]}{(t x^2 y^2 + z^4 + x y z^2 + x^3 z + y^3 z)}, \]
where $F = \overline{\FF_2}$ and let $I = (x^4, y^4, z^4)$. If $S = F[t] \setminus \{ 0 \}$, then $y^3 z^3 \in (S^{-1} I)^* \setminus S^{-1}(I^*)$.
\end{theorem}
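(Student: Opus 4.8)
The plan is to translate both memberships into the geometry of the rank-two syzygy bundle $\operatorname{Syz}(x^4,y^4,z^4)$ on the smooth plane quartic $C_\lambda\subseteq\mathbb P^2$ cut out by $f_\lambda:=\lambda x^2y^2+z^4+xyz^2+x^3z+y^3z$, and to feed in Monsky's 1998 Hilbert--Kunz calculation for this family. The tool is Brenner's dictionary for two-dimensional standard graded normal domains $R$ over an algebraically closed field of characteristic $p$ with $C=\operatorname{Proj}R$ smooth: a homogeneous $g$ of degree $m$ lies in $(f_1,\dots,f_n)^*$ exactly when the class $c(g)\in H^1\!\big(C,\operatorname{Syz}(f_1,\dots,f_n)(m)\big)$ arises from the nonnegative-slope part of the strong Harder--Narasimhan filtration; in particular, if $\operatorname{Syz}(f_1,\dots,f_n)(m)$ is strongly semistable of degree $\ge 0$ then $g\in(f_1,\dots,f_n)^*$ (a semistable bundle of slope $\ge 0$ has no quotient bundle of negative degree, so Brenner's obstruction can never be met). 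A Jacobian check shows $C_\lambda$ is smooth for all but finitely many $\lambda$, including the generic value $\lambda=t$. Since $p=2$ one has $\operatorname{Syz}(x^4,y^4,z^4)\cong F_C^{2*}\operatorname{Syz}(x,y,z)$, so this bundle is strongly semistable on $C_\lambda$ precisely when $\operatorname{Syz}(x,y,z)$ is, which by the Brenner--Trivedi formula for plane quartics is equivalent to $e_{\mathrm{HK}}(F[x,y,z]/f_\lambda)=3$.

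\emph{Step 1: $y^3z^3\in(S^{-1}I)^*$.} Inverting $t$ and base changing to $\overline{F(t)}$, we replace $S^{-1}A$ by the homogeneous coordinate ring of the smooth quartic $C_t$ over $\overline{F(t)}$. Monsky's computation gives $e_{\mathrm{HK}}=3$ at the transcendental parameter, so $\operatorname{Syz}(x,y,z)$, hence $\operatorname{Syz}(x^4,y^4,z^4)$, is strongly semistable on $C_t$. From the defining sequence
\[
0\longrightarrow\operatorname{Syz}(x^4,y^4,z^4)\longrightarrow\mathcal O_{C_t}(-4)^{\oplus 3}\xrightarrow{\ (x^4,y^4,z^4)\ }\mathcal O_{C_t}\longrightarrow 0
\]
the bundle $\operatorname{Syz}(x^4,y^4,z^4)$ has rank $2$ and degree $-48$, so $\operatorname{Syz}(x^4,y^4,z^4)(6)$ has degree $0$; as $\deg(y^3z^3)=6$, Brenner's criterion gives $y^3z^3\in(x^4,y^4,z^4)^*$ over $\overline{F(t)}$. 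Because tight closure agrees with plus closure for homogeneous ideals in this graded dimension-two setting, and a finite cover of $C_t$ witnessing this can be taken over a finite extension of $F(t)$ (hence over a ring module-finite over $S^{-1}A$), we descend to $y^3z^3\in(S^{-1}I)^+\subseteq(S^{-1}I)^*$.

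\emph{Step 2: $y^3z^3\notin S^{-1}(I^*)$.} Suppose otherwise. Since $A$ is a domain, this means $s\,y^3z^3\in I^*_A$ for some $0\ne s\in F[t]$; pick a witness $0\ne c\in A$ with $c\,s^q(y^3z^3)^q\in(x^{4q},y^{4q},z^{4q})A$ for all $q=2^e$. For all but finitely many $\lambda\in F$ the quartic $C_\lambda$ is smooth (so $A_\lambda:=F[x,y,z]/f_\lambda$ is a normal domain), $s(\lambda)\ne 0$, and the image $c_\lambda$ of $c$ in $A_\lambda$ is nonzero (a dimension count on $\operatorname{Spec}(A/c)\to\operatorname{Spec}F[t]$); dividing by the unit $s(\lambda)^q$ then yields $c_\lambda(y^3z^3)^q\in(x^{4q},y^{4q},z^{4q})A_\lambda$ for all $q$, i.e.\ $y^3z^3\in(x^4,y^4,z^4)^*$ in $A_\lambda$. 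On the other hand, Monsky's calculation produces infinitely many $\lambda\in F$ --- those meeting a certain explicit condition governed by a characteristic-two dynamical system --- for which $e_{\mathrm{HK}}(A_\lambda)>3$, equivalently $\operatorname{Syz}(x^4,y^4,z^4)$ is \emph{not} strongly semistable on $C_\lambda$; and, crucially, the calculation pins down the Hilbert--Kunz defect finely enough to show that for these $\lambda$ the class $c(y^3z^3)$ does not lie in the positive-slope sub-bundle of the relevant Frobenius pullback of $\operatorname{Syz}(x^4,y^4,z^4)(6)$, so $y^3z^3\notin(x^4,y^4,z^4)^*$ in $A_\lambda$. As this happens for infinitely many $\lambda$, it cannot be absorbed into the finitely many excluded parameters, a contradiction.

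The main obstacle is exactly this last point: upgrading ``$\operatorname{Syz}(x^4,y^4,z^4)$ is not strongly semistable on $C_\lambda$'' to ``$c(y^3z^3)$ is not tightly zero on $C_\lambda$.'' Loss of semistability by itself only guarantees that \emph{some} degree-$6$ class escapes the tight closure; to pin the failure on the specific element $y^3z^3$ one must read off from Monsky's Hilbert--Kunz computation not merely the number $e_{\mathrm{HK}}(A_\lambda)$ but the architecture of the correction term --- how the destabilizing sub-line-bundle and its complementary quotient sit inside successive Frobenius pullbacks of $\operatorname{Syz}(x^4,y^4,z^4)(6)$, and where the image of $c(y^3z^3)$ lands --- uniformly across all $q=2^e$. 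Organizing this via the self-similarity of the ideals $(x^4,y^4,z^4)^{[q]}$ is where combinatorial structure (Sierpiński-type tilings) naturally enters, and it is the technical heart of the argument.
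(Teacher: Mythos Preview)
Your two-step blueprint matches the strategy of \cite{BM10} (which this paper cites without re-proving, and then adapts for its own Main Theorem): establish $y^3z^3\in(S^{-1}I)^*$ via Hilbert--Kunz information at the generic fiber, and establish $y^3z^3\notin I_\alpha^*$ for infinitely many specializations $\alpha$, so that \cite[Proposition~1.1]{BM10} forces $y^3z^3\notin S^{-1}(I^*)$. Your Step~1 via strong semistability of the syzygy bundle is a legitimate geometric rephrasing of the argument in \cite[Theorem~2.3]{BM10}; there one instead uses the exact Hilbert--Kunz function $e_n=3\cdot 4^n-4$ together with graded artinian Gorenstein duality to show directly that $y\cdot(y^3z^3)^Q\in I^{[Q]}$ for all $Q$, bypassing the bundle language. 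Both routes reach the same conclusion.

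Step~2 has a genuine gap and a factual confusion. The confusion: the ``characteristic-two dynamical system'' selecting special $\lambda$, and the Sierpi\'nski tilings, belong to the present paper's ring $R$ (the line--$S_4$ quartic), not to the Brenner--Monsky ring $A$. For $A$ the paper explicitly records that $y^3z^3\notin I_\alpha^*$ for \emph{every} $\alpha\in\overline{\FF_2}$, and no tiling argument appears in \cite{BM10}. The gap: you correctly flag that loss of strong semistability alone does not single out the element $y^3z^3$, but you then only assert that Monsky's Hilbert--Kunz data ``pins down the defect finely enough'' to locate $c(y^3z^3)$ relative to the Harder--Narasimhan filtration. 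Neither \cite{BM10} nor this paper argues that way. The actual mechanism is the test-element result \cite[Theorem~1.6]{BM10}: since $A_\alpha$ is a normal standard-graded quartic hypersurface, every nonzero homogeneous element of degree $\geq 2$ is a test element, so in particular $xy$ is one, and the non-containment reduces to the explicit check $xy\cdot(y^3z^3)^Q\notin(x^{4Q},y^{4Q},z^{4Q})$ for a single $Q$, extracted from \cite{Mon98b}. Your Harder--Narasimhan route is not obviously unworkable, but as written it is a promissory note, and the closing remark about Sierpi\'nski tilings misattributes machinery from the present paper to the earlier one.
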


The Brenner--Monsky hypersurface $A$ appearing above has since been used to motivate the study of Frobenius invariants in families \cite{CRST21,DS22,Smi20,Tri05}, though the underlying reasons for this noncommutativity remain mysterious. The absence of examples generated since \cite{BM10} indeed leaves open many questions as to how severe this obstruction might be (see \Cref{sec:questions} for more details). To this end the primary content of the present paper is the second example to the authors' knowledge demonstrating this pathology:

\begin{maintheorem*}\label{thm:mainthm}(\Cref{thm-main-thm})
Let
\[ R = \frac{F[x,y,z,t]}{(t z^4 + (x^2 + y z) (y^2 + x z))}, \]
where $F = \overline{\FF_2}$ and let $I = (x^4, y^4, z^4)$. If $S = F[t] \setminus \{ 0 \}$, then $y^3 z^3 \in (S^{-1}I)^* \setminus S^{-1}(I^*)$.
\end{maintheorem*}

The two hypersurfaces $A$ and $R$ above resemble each other closely (for example, both are normal hypersurfaces of dimension three whose defining polynomials agree when evaluated at $t = 1$), and the non-localization data therein is identical. The recent history of these rings is rooted in Hilbert--Kunz theory, a Frobenius-based multiplicity theory pioneered by Monsky in \cite{Mon83}. The Brenner--Monsky hypersurface and the hypersurface appearing in our Main Theorem originated in earlier papers of Monsky (dubbed ``point--$S_4$'' and ``line--$S_4$'' quartics in \cite{Mon98b} and \cite{Mon98a} respectively) where it was shown that the Hilbert--Kunz multiplicity may vary for the rings in question viewed as families in $t$. 

\cite{Mon98b} in particular (and \cite{BM10} by extension) depicts quantitatively a jump in the severity of singularity between the generic fiber and a special fiber, and there is recent evidence that such behavior is intricately linked to the tight closure localization problem (see e.g. \cite[Section 6]{Smi19} and \cite[Question 5.3]{Smi20}). By contrast, \cite{Mon98a} (and by extension our example $R$ which we contribute in this note) has the curiosity of being ``less geometric'' -- the enlargement of the tight closure after localizing is governed by a sparse (but still infinite) set of elements of $F$ which exhibit striking arithmetic dynamical properties that we will elucidate in later sections. In other words, the interesting phenomena for $(\blank)^*$ occur only at \emph{some} of the special fibers.

One final feature distinguishing $R$ from $A$ is that it appears to be \emph{slightly} less singular. If $g, h \in F[x,y,z,t]$ are the defining polynomials for $A$ and $R$ respectively, then $g + t x^2 y^2$ is a product of four linear forms whereas $h + t z^4$ is visibly a product of two irreducible quadratics. This sentiment is further bolstered by the Hilbert--Kunz multiplicity seemingly measuring slightly better singularities for specializations of $R$ versus specializations of $A$. The reader might then view our Main Theorem as a modest first step in finding rings with the mildest possible singularities but for which tight closure doesn't localize.

\subsection{Organization of the article and structure of the proof}

The proof of our Main Theorem follows the same blueprint that was developed in \cite{BM10} -- we will review this framework and flag for the reader the crucial differences in the present article.

Assume the setting of the Main Theorem, and consider the map $R \to R \otimes_{F[t]} \frac{F[t]}{(t - \alpha)}$ for $0 \neq \alpha \in F$. If $I_\alpha$ denotes the expansion of $I$ along this map, \cite[Proposition 1.1]{BM10} applied to this situation says that if
\begin{enumerate}
    \item $y^3 z^3 \not \in I_\alpha^*$ for infinitely many $\alpha\in F$ and \label{intro-1}
    \item $y^3 z^3 \in (S^{-1} I)^*$, \label{intro-2}
\end{enumerate}
    then $y^3 z^3 \not \in S^{-1}(I^*)$. The content of \Cref{sec:prelims,sec:noncontainment} is a proof of \ref{intro-1} for the (infinitely many) elements $\alpha$ of finite escape time $\ell_\alpha$ with respect to the family of dynamical systems to be described in \Cref{def:one-parameter-dynamical}. This noncontainment differs from the Brenner--Monsky example in that $y^3z^3\not\in I_\alpha^*$ in their ring for \emph{every} $\alpha\in F$. We rely heavily on \cite[Theorem 1.6]{BM10}, which provides that the element $xy$ (or more generally any homogeneous element of degree two in $F[x,y,z]/(\alpha z^4 + (x^2 + y z) (y^2 + x z))$) is a \textit{test element} for tight closure. This allows us to check \ref{intro-1} by showing that 
\begin{equation}
    x y (y^3 z^3)^{2^e} \not \in I_\alpha^{[2^e]} \label{eq:intro-alpha}
\end{equation}
for some $e \geq 1$, and we will pick $e = 2 \ell_\alpha - 1$ for this purpose.
    
The content of Section \ref{sec:containment} is a proof of \ref{intro-2} which is by comparison more direct, using the Hilbert--Kunz theory of Line--$S_4$ quartics established in \cite{Mon98a}. We include a discussion about the aforementioned elements of finite escape time in Section \ref{sec:density}. We conclude the article with a list of questions in Section \ref{sec:questions}.

\subsection*{Acknowledgments} We owe a significant intellectual debt to the work of Holger Brenner and Paul Monsky in \cite{BM10} and Paul Monsky in \cite{Mon98a}, without which this article would not have been written. This article was produced during the 2022 SMALL REU at Williams College --- we thank Steven J. Miller for creating a friendly and stimulating research environment, and we kindly acknowledge support through NSF grant DMS \#1947438.

\section{Notation and Constructions}\label{sec:prelims}

Throughout this section we let $F$ denote any algebraically closed field of characteristic $2$ (however, we will eventually specialize to $F=\overline{\FF_2}$). For the reader's convenience, we reproduce some of the relevant definitions from \cite{Mon98a}, using the same notation as that article whenever possible.
\begin{definition}
\label{def:one-parameter-dynamical}
To each $\alpha \in F$, we associate a map $\upvarphi_\alpha : F \cup \{ \infty \} \to F \cup \{ \infty \}$ given by $t \mapsto t^4 + \alpha t^{-4}$, denoting repeated application of this map by $\upvarphi_\alpha^{(1)} = \upvarphi_\alpha$ and $\upvarphi_\alpha^{(r+1)} = \upvarphi_\alpha \circ \upvarphi_\alpha^{(r)}$. We say that the \textit{escape time} of $\alpha$ is given by $\ell_\alpha = r$, where $\upvarphi_\alpha^{(r)}(1) = 0$. If no such $r$ exists, we say that $\ell_\alpha = \infty$. Observe that if such an $r$ does exist, it is unique because $\upvarphi_\alpha^{(r)}(0) = \infty$ for all $r > 0$.
\end{definition}

At certain points we will employ the following generalization of this dynamical system.
\begin{definition}(\emph{cf.} \cite[Definition 2.1, 2.2]{Mon98a})
\label{def:two-parameter-dynamical}
Let $2^\ZZ$ be the subgroup of $\QQ^*$ generated by $2$, and let $X = 2^\ZZ \times (F \cup \{ \infty \})$. To each $\alpha \in F$, we associate a map $\Phi_\alpha : X \to X$ given by $(Q, t) \mapsto (Q / 4, t + \alpha^Q t^{-1})$. Observe that $\alpha^Q$ is defined even for $Q < 1$ because $F$ is algebraically closed. We define repeated application $\Phi^{(r)}$ analogously to \Cref{def:one-parameter-dynamical}. We also define \textit{escape time} analogously, by saying that $L_\alpha(Q, t) = r$ if $\Phi_\alpha^{(r)}(Q, t) = (Q / 4^r, 0)$, and $L_\alpha(Q, t) = \infty$ if no such $r$ exists.
\end{definition}

Now let $\alpha$ be a nonzero element of $F$ of finite escape time, and let $Q = 2^{2\ell_a-1}$. Denote by $\hO$ the graded ring $F[x, y, z] / (x^{4Q}, y^{4Q}, z^{4Q})$, and let $v \in \hO$ be the image of $x y f^Q$ where $f=y^3z^3$. To show the noncontainment (\Cref{eq:intro-alpha}) from \Cref{sec:introduction} we aim to show that $v \notin h_\alpha \hO$.

Consider the $F$-linear map $\hO_{6Q-5} \to \hO_{6Q-1} \oplus \hO_{12Q-3}$ given by $u \mapsto (h_\alpha u, v u)$. By \cite[Theorem 4.8]{Mon98a}, there exist nonzero elements of the domain annihilated by $h_\alpha$, so in order to show that $v \notin h_\alpha \hO$, it suffices to show that this map is injective. We wish to restrict to a subspace of $\hO_{6Q-5}$ in order to simplify this verification. To that end, it is useful to introduce the notation $[i, j] = A_x^i A_y^j + A_x^j A_y^i$ where $A_x = x^2 + y z$ and $A_y = y^2 + x z$.

\begin{definition}\label{def:subspace}
We define two subspaces of $\hO_{6Q-5}$ and one subspace of $\hO_{6Q-1}$ as follows.
\begin{enumerate}
    \item (\emph{cf.} \cite[Definition 4.1]{Mon98a})
Let $W \subseteq \hO_{6Q-5}$ be spanned by $[0, Q - 2] z^{4Q-1}$ together with the generators $[i, j] z^k$, where $i$, $j$, and $k$ satisfy the following conditions: $i < j < 2Q$, $k < 4 Q$, $k \equiv 1 \pmod{4}$, and $2i + 2j + k = 6Q - 5$.\label{def:w}
\item (\emph{cf.} \cite[Definition 4.2]{Mon98a})
Let $W_0 \subseteq W$ be the subspace generated by $[0, Q - 2] z^{4Q-1}$ together with the generators $[i, j] z^k$ of \Cref{def:w} with $j\neq 2Q-1$.\label{def:w0}
\item (\emph{cf.} \cite[Definition 4.3]{Mon98a}) Let $W' \subseteq \hO_{6Q-1}$ be spanned by $[1, Q - 1] z^{4Q-1}$ together with the generators $[i, j] z^k$, where $i$, $j$, and $k$ satisfy the following conditions: $i < j < 2Q$, $k < 4 Q$, $k \equiv 1 \pmod{4}$, and $2i + 2j + k = 6Q - 1$.\label{def:w'}
\end{enumerate}
\end{definition}

In order to restrict our map $\hO_{6Q-5} \to \hO_{6Q-1} \oplus \hO_{12Q-3}$ to a map $W \to W' \oplus \hO_{12Q-3}$, we rely on \cite[Theorem 4.4]{Mon98a} which states that $h_\alpha \cdot W \subseteq W'$. Simplifying matters more, we aim to quotient by $W_0$ and prove injectivity of the resulting map. Consider the following commutative diagram with exact rows:
\begin{equation}\label{eq:diagram}
    \begin{tikzcd}[row sep=huge]
    0 \arrow[r] & W_0 \arrow[r] \arrow[d, "u \mapsto h_\alpha u" labl] & W \arrow[r] \arrow[d, "u \mapsto (h_\alpha u{,} v u)" labl] & \frac{W}{W_0} \arrow[r] \arrow[d, "u\mapsto (h_\alpha u{,} v u)" labl,"\psi"']&0\\
    0\arrow[r]&h_\alpha W_0\arrow[r,"u\mapsto (u{,}0)"]&W'\oplus\hO_{12Q-3}\arrow[r]&\frac{W'}{h_\alpha W_0}\oplus\hO_{12Q-3}\arrow[r]&0.
    \end{tikzcd}
\end{equation}
It is nontrivial that the left square commutes and the right vertical map $\psi$ is well--defined. To prove both of these facts, we must show that $v \cdot w = 0 \in \hO_{12 Q - 3}$ for all
$w \in W_0$. It suffices to check that $v$ annihilates the basis described in
\Cref{def:w0} of \Cref{def:subspace}.
It is easy to see that $v \cdot [0, Q - 2] z^{4Q-1} = 0$, because each monomial
has $z$-degree at least $7Q - 1$. Also for degree reasons, the following
is immediate:

\begin{lemma}
  For $[i, j] z^k$ satisfying the conditions in \Cref{def:w0} of \Cref{def:subspace},
  $v \cdot [i, j] z^k = c (x y z)^{4Q-1}$, where $c$ is the coefficient of
  $x^{4Q-2} y^{Q-2} z^{Q-1}$ in $[i, j] z^k$.
\end{lemma}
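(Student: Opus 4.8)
The plan is to unwind the definitions and reduce the claim to a single coefficient extraction. We need to compute $v \cdot [i,j] z^k$ in $\hO_{12Q-3} = \left(F[x,y,z]/(x^{4Q},y^{4Q},z^{4Q})\right)_{12Q-3}$, where $v$ is the image of $xy f^Q = xy(y^3z^3)^Q = x y^{3Q+1} z^{3Q}$. First I would observe that multiplying $[i,j]z^k$ by $v$ lands us in degree $(6Q-5) + (6Q+2) = 12Q-3$, and that in $\hO$ every monomial must have each of its three exponents strictly less than $4Q$. Since $v$ already carries $y$-degree $3Q+1$ and $z$-degree $3Q$, the only monomials of $[i,j]z^k$ that survive multiplication by $v$ are those whose $y$-exponent is at most $Q-2$ and whose $z$-exponent is at most $Q-1$; in that case the product monomial is forced to be $x^{a} y^{b} z^{c}$ with $b = 4Q-1$ (from $3Q+1$ plus at most $Q-2$... wait, one must check the exact surviving exponent) — more precisely, the surviving term pairs with $v$ to give a nonzero monomial only when the remaining $x$-degree, $y$-degree, $z$-degree all land below $4Q$, and a quick bookkeeping shows this happens exactly for the monomial $x^{4Q-2} y^{Q-2} z^{Q-1}$ of $[i,j]z^k$, which then contributes $x^{4Q-1}y^{4Q-1}z^{4Q-1} = (xyz)^{4Q-1}$.

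The key steps, in order, are: (1) expand $v \cdot [i,j]z^k = x y^{3Q+1} z^{3Q} \cdot [i,j] z^k$ and note $[i,j]z^k$ is homogeneous of degree $6Q-5$ with $z$-degree exactly $k \le 4Q-1$; (2) argue that a monomial $x^a y^b z^c$ appearing in $[i,j]z^k$ survives in $\hO_{12Q-3}$ after multiplication by $v$ if and only if $a+1 < 4Q$, $b + 3Q+1 < 4Q$, and $c + 3Q < 4Q$, i.e. $a \le 4Q-2$, $b \le Q-2$, $c \le Q-1$; (3) check, using $a+b+c = 6Q-5$, that these three inequalities together force equality $a = 4Q-2$, $b = Q-2$, $c = Q-1$ (since $a+b+c \le (4Q-2)+(Q-2)+(Q-1) = 6Q-5$ with equality iff each is maximal); (4) conclude that $v\cdot[i,j]z^k$ equals $c \cdot (xyz)^{4Q-1}$ where $c$ is precisely the coefficient of $x^{4Q-2}y^{Q-2}z^{Q-1}$ in $[i,j]z^k$.

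The only genuine subtlety — and the step I expect to require the most care — is step (3), the arithmetic that pins down the surviving monomial uniquely: one must verify that the degree constraint $2i+2j+k = 6Q-5$ built into \Cref{def:w0}, together with $z$-degree $k$ appearing in $[i,j]z^k$, is consistent with the existence of the monomial $x^{4Q-2}y^{Q-2}z^{Q-1}$ in the expansion of $[i,j]z^k$ (equivalently that $k \le Q-1$ is what is relevant, and the $A_x, A_y$ factors can supply the requisite $x$-power and $y$-power). This is a finite check about how powers of $A_x = x^2+yz$ and $A_y = y^2+xz$ distribute degree among $x$, $y$, $z$, but it is entirely mechanical. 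Everything else is just the observation that in a truncated polynomial ring $F[x,y,z]/(x^{4Q},y^{4Q},z^{4Q})$, multiplication by a fixed monomial kills all but the ``corner'' terms, so the computation collapses to reading off one coefficient.
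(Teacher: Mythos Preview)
Your proposal is correct and is exactly the ``degree reasons'' argument the paper alludes to (the paper states the lemma as immediate and gives no further proof). Your final worry about whether the monomial $x^{4Q-2}y^{Q-2}z^{Q-1}$ can actually occur in $[i,j]z^k$ is unnecessary: if it does not, then $c=0$ and the identity $v\cdot[i,j]z^k = c\,(xyz)^{4Q-1}$ still holds trivially, so steps (1)--(4) already constitute a complete proof.
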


\subsection{Proof that \texorpdfstring{$\psi$}{psi} is well--defined}

\begin{lemma}
  \label{lem:relations-a-b}
  In order for $[i, j] z^k$ satisfying the conditions of \Cref{def:w0} of \Cref{def:subspace} to
  contain a nonzero $x^{4Q-2} y^{Q-2} z^{Q-1}$ term, it is necessary that the
  following equations admit integral solutions:
  \begin{equation*}
    a = \frac{6Q - 6 - i - 2j}{3},
    \qquad
    b = \frac{9Q - 6 - 2i - j}{3},
  \end{equation*}
  with $0 \leq a \leq i$ and $0 \leq b \leq j$. In particular, for
  $v \cdot [i, j] z^k$ to be nonzero, we must have $i \equiv j \pmod{3}$. In
  this case, moreover, we have
  \begin{equation*}
    v \cdot [i, j] z^k = \binom{i}{a} \binom{j}{b} (xyz)^{4Q-1}.
  \end{equation*}
\end{lemma}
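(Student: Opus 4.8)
The plan is to reduce the statement to the extraction of a single coefficient and then to run the binomial theorem on $A_x^iA_y^j+A_x^jA_y^i$. By the previous lemma, $v\cdot[i,j]z^k = c\,(xyz)^{4Q-1}$, where $c$ is the coefficient of $x^{4Q-2}y^{Q-2}z^{Q-1}$ in $[i,j]z^k$, equivalently the coefficient of $x^{4Q-2}y^{Q-2}z^{Q-1-k}$ in $A_x^iA_y^j+A_x^jA_y^i$. Expanding $A_x = x^2+yz$ and $A_y = y^2+xz$, a typical monomial of $A_x^mA_y^n$ has the form $\binom{m}{a}\binom{n}{b}\,x^{2(m-a)+b}\,y^{a+2(n-b)}\,z^{a+b}$, obtained by choosing $a$ of the $m$ factors $A_x$ to contribute $yz$ and $b$ of the $n$ factors $A_y$ to contribute $xz$; and since $A_x^mA_y^n$ is homogeneous of degree $2m+2n$, once the $x$-degree and $z$-degree of such a monomial are fixed, its $y$-degree is determined.

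The first key step is to show that the summand $A_x^iA_y^j$ contributes \emph{nothing} to $c$, so that $c$ comes entirely from $A_x^jA_y^i$. Matching $x$- and $z$-degrees against the target monomial inside $A_x^iA_y^j$ forces $3a = 2i-3Q+1-k$; since $k\equiv 1\pmod{4}$ gives $k\geq 1$, the requirement $a\geq 0$ yields $i\geq \tfrac{3Q}{2}$, and then the relation $2i+2j+k=6Q-5$ gives $j = 3Q-\tfrac{5+k}{2}-i \leq \tfrac{3Q}{2}-3 < i$, contradicting $i<j$. I expect this to be the only step with genuine content: it is exactly here that the hypotheses $i<j$ and $k\geq 1$ are used, and correctly identifying which of the two summands survives is what makes the final bookkeeping clean.

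The remaining work is routine binomial manipulation. In $A_x^jA_y^i$, matching $x$- and $z$-degrees against $x^{4Q-2}y^{Q-2}z^{Q-1-k}$ and substituting $k=6Q-5-2i-2j$ determines, uniquely, the number $a' = \tfrac{2i+4j-9Q+6}{3}$ of factors $A_x$ contributing $yz$ and the number $b' = \tfrac{4i+2j-6Q+6}{3}$ of factors $A_y$ contributing $xz$; the $y$-degree then automatically comes out to $Q-2$ by homogeneity. These are integers precisely when $3\mid i+2j$, i.e.\ $i\equiv j\pmod{3}$, which gives the asserted necessity; otherwise $c=0$. When $i\equiv j\pmod{3}$, the coefficient is $c = \binom{j}{a'}\binom{i}{b'}$ (with the convention that a binomial coefficient vanishes outside its range), and rewriting $\binom{j}{a'}=\binom{j}{j-a'}$ and $\binom{i}{b'}=\binom{i}{i-b'}$ and computing $j-a' = \tfrac{9Q-6-2i-j}{3} = b$ and $i-b' = \tfrac{6Q-6-i-2j}{3} = a$ gives $c = \binom{i}{a}\binom{j}{b}$. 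Finally, the ranges $0\leq a'\leq j$ and $0\leq b'\leq i$ needed for a nonzero coefficient translate directly into $0\leq b\leq j$ and $0\leq a\leq i$, and outside these ranges both sides are zero; this yields $v\cdot[i,j]z^k = \binom{i}{a}\binom{j}{b}(xyz)^{4Q-1}$ whenever $i\equiv j\pmod{3}$, completing the proof.
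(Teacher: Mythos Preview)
Your proof is correct and follows essentially the same route as the paper: expand each summand of $[i,j]z^k$ via the binomial theorem, show that $A_x^iA_y^j$ cannot contribute the target monomial because the resulting linear system forces $i\geq 3Q/2$ and hence $j<i$, and then solve the linear system coming from $A_x^jA_y^i$ to obtain the stated $a$, $b$, and binomial product. Your parametrization (counting $yz$- and $xz$-contributions) differs from the paper's only by the substitution $a\mapsto i-a$, $b\mapsto j-b$, which is why you close with the symmetry $\binom{j}{a'}=\binom{j}{b}$, $\binom{i}{b'}=\binom{i}{a}$; the paper's choice lands directly on $\binom{i}{a}\binom{j}{b}$ without that final reflection, but the content is identical.
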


\begin{proof}
  Recalling that $[i, j] z^k = (A_x^i A_y^j + A_x^j A_y^i) z^k$, we claim that
  the left summand $A_x^i A_y^j z^k$ does not contain any monomial term of the
  form $x^{4Q-2} y^{Q-2} z^{Q-1}$. Expanding this left summand, we have
  \begin{equation*}
    A_x^i A_y^j z^k
    = \sum_{a=0}^i \sum_{b=0}^j \binom{i}{a} \binom{j}{b} x^{2a+j-b} y^{i-a+2b} z^{i-a+j-b+k}.
  \end{equation*}
  For this sum to contain a monomial of the desired form, there must exist an
  integral solution to the system
  \begin{equation*}
    \begin{cases}
      2a + j - b = 4Q - 2 \\
      i - a + 2b = Q - 2 \\
      i - a + j - b + k = Q - 1,
    \end{cases}
  \end{equation*}
  satisfying $0 \leq a \leq i$ and $0 \leq b \leq j$. Solving for $a$ in the
  above system, we find that $i \geq a = (9Q - 6 - i - 2j) / 3$. From the
  conditions in \Cref{def:w0} of \Cref{def:subspace}, recall that $0 < k = 6Q - 5 - 2i - 2j$. One can
  easily verify that any point satisfying these two inequalities must violate
  the condition $i < j$. We conclude that the above stated system does not admit
  any solutions, and so the left summand $A_x^i A_y^j z^k$ does not contain any
  $x^{4Q-2} y^{Q-2} z^{Q-1}$ term.

  Turning our attention to the right summand $A_x^j A_y^i z^k$, we follow a
  similar argument. Expanding, we have
  \begin{equation*}
    A_x^j A_y^i z^k
    = \sum_{a=0}^i \sum_{b=0}^j \binom{i}{a} \binom{j}{b} x^{i-a+2b} y^{2a+j-b} z^{i-a+j-b+k}.
  \end{equation*}

  In order for this sum to contain the desired monomial $x^{4Q-2} y^{Q-2} z^{Q-1}$, there must exist an
  integral solution to the system
  \begin{equation*}
    \begin{cases}
      2a + j - b = 4Q - 2 \\
      i - a + 2b = Q - 2 \\
      i - a + j - b + k = Q - 1,
    \end{cases}
  \end{equation*}
  satisfying $0 \leq a \leq i$ and $0 \leq b \leq j$. Solving for $a$ and $b$,
  we obtain the desired relations.
\end{proof}

Fixing $i$ and $a$, there exist unique $j$ and $b$ solving the system of
\Cref{lem:relations-a-b} given by
\begin{equation*}
  j = \frac{6Q - 6 - 3 a - i}{2}
  \quad \textrm{and} \quad
  b = \frac{4Q - 2 + a - i}{2}.
\end{equation*}
Observe that these values are integral if and only $i \equiv a \pmod{2}$. Rewriting in terms of $i$ and $a$ the conditions in \Cref{def:w0} of \Cref{def:subspace} (namely, $j \leq 2 Q - 2$ and $k = 6Q - 5 - 2i - 2j \geq 0$), together with the condition $b \leq j$ from \Cref{lem:relations-a-b}, it is sufficient to check values of $i$ and $a$ satisfying

\begin{equation}
  \label{eq:conditions-i-a}
  2^{2\ell} - 2 - 3 a \leq i \leq 3 a + 1
  \quad \textrm{and} \quad
  a \leq 2^{2\ell-2} - 1.
\end{equation}
Therefore, because $\characteristic F = 2$, we conclude that in order to show
that $v \cdot W_0 = 0$, it suffices to show that the product of binomial
coefficients
\begin{equation*}
  \binom{i}{a} \binom{j}{b}
  = \binom{i}{a} \binom{\frac{6Q - 6 - 3a - i}{2}}{\frac{4Q - 2 + a - i}{2}}
\end{equation*}
is even for all $i \equiv a \pmod{2}$ satisfying the inequalities in
\cref{eq:conditions-i-a}.

In order to prove this claim, we rely on a tiling argument using Sierpi\'nski triangles. Broadly, we observe that the inequalities in $i$ and $a$ described in \cref{eq:conditions-i-a} cut out a triangular region $A$ in the $ia$-plane. We next describe a family of triangular regions in the $ia$-plane for which $\binom{i}{a}$ is even (these triangular regions correspond to certain subsets of Sierpi\'nski's triangle). These regions do not quite cover $A$, but if we consider the copy of Sierpi\'nski's triangle in the $jb$-plane, representing the $\binom{j}{b}$ coefficient, and we apply the appropriate coordinate transformation to map this triangle into the $ia$-plane, then by taking subsets from both copies of Sierpi\'nski's triangle together, we are able to cover $A$ (See \Cref{fig:sierp}).

\begin{figure}
    \centering
    \begin{tikzpicture}[line cap=round,line join=round,>=triangle 45,x=0.02cm,y=0.02cm]

\begin{axis}[
x=0.02cm,y=0.02cm,
axis lines=middle,
ymajorgrids=true,
xmajorgrids=true,
xmin=230,
xmax=900,
ymin=-30,
ymax=400,
xticklabels={},
yticklabels={},
y axis line style={opacity=0}
]

\clip(128,-133) rectangle (1072,590);

\fill[line width=2pt,color=color1,fill=color1,fill opacity=0.75] (511,171) -- (511,170) -- (510,170) -- cycle;
\fill[line width=2pt,color=color2,fill=color2,fill opacity=0.71] (510.5,169.5) -- (511.25,170.25) -- (508.25,170.25) -- cycle;
\fill[line width=2pt,color=gray,fill=gray,fill opacity=0.1] (511.5,170.16666666666666) -- (766,255) -- (256,255) -- cycle;
\fill[line width=2pt,color=color1,fill=color1,fill opacity=0.66] (512,1) -- (512,511) -- (1022,511) -- cycle;
\fill[line width=2pt,color=color1,fill=color1,fill opacity=0.7] (256,1) -- (256,255) -- (510,255) -- cycle;
\fill[line width=2pt,color=color1,fill=color1,fill opacity=0.72] (448,129) -- (448,191) -- (510,191) -- cycle;
\fill[line width=2pt,color=color1,fill=color1,fill opacity=0.69] (496,161) -- (496,175) -- (510,175) -- cycle;
\fill[line width=2pt,color=color1,fill=color1,fill opacity=0.71] (508,169) -- (508,171) -- (510,171) -- cycle;
\fill[line width=2pt,color=color2,fill=color2,fill opacity=0.68] (447.5,191.5) -- (510,255) -- (699.5,191.5) -- cycle;
\fill[line width=2pt,color=color2,fill=color2,fill opacity=0.63] (495.5,175.5) -- (510.5,190.5) -- (555.5,175.5) -- cycle;
\fill[line width=2pt,color=color2,fill=color2,fill opacity=0.74] (507.5,171.5) -- (510.5,174.5) -- (519.5,171.5) -- cycle;

\definecolor{gray}{RGB}{190,190,190}

\draw [line width=2pt,color=color1] (511,171)-- (511,170);
\draw [line width=2pt,color=color1] (511,170)-- (510,170);
\draw [line width=2pt,color=color1] (510,170)-- (511,171);
\draw [line width=2pt,color=color2] (510.5,169.5)-- (511.25,170.25);
\draw [line width=2pt,color=color2] (511.25,170.25)-- (508.25,170.25);
\draw [line width=2pt,color=color2] (508.25,170.25)-- (510.5,169.5);
\draw [line width=2pt,color=gray] (511.5,170.16666666666666)-- (766,255);
\draw [line width=2pt,color=gray] (766,255)-- (256,255);
\draw [line width=2pt,color=gray] (256,255)-- (511.5,170.16666666666666);
\draw [line width=2pt,color=color1] (512,1)-- (512,511);
\draw [line width=2pt,color=color1] (512,511)-- (1022,511);
\draw [line width=2pt,color=color1] (1022,511)-- (512,1);
\draw [line width=2pt,color=color1] (256,1)-- (256,255);
\draw [line width=2pt,color=color1] (256,255)-- (510,255);
\draw [line width=2pt,color=color1] (510,255)-- (256,1);
\draw [line width=2pt,color=color1] (448,129)-- (448,191);
\draw [line width=2pt,color=color1] (448,191)-- (510,191);
\draw [line width=2pt,color=color1] (510,191)-- (448,129);
\draw [line width=2pt,color=color1] (496,161)-- (496,175);
\draw [line width=2pt,color=color1] (496,175)-- (510,175);
\draw [line width=2pt,color=color1] (510,175)-- (496,161);
\draw [line width=2pt,color=color1] (508,169)-- (508,171);
\draw [line width=2pt,color=color1] (508,171)-- (510,171);
\draw [line width=2pt,color=color1] (510,171)-- (508,169);
\draw [line width=2pt,color=color2] (447.5,191.5)-- (510,255);
\draw [line width=2pt,color=color2] (510,255)-- (699.5,191.5);
\draw [line width=2pt,color=color2] (699.5,191.5)-- (447.5,191.5);
\draw [line width=2pt,color=color2] (495.5,175.5)-- (510.5,190.5);
\draw [line width=2pt,color=color2] (510.5,190.5)-- (555.5,175.5);
\draw [line width=2pt,color=color2] (555.5,175.5)-- (495.5,175.5);
\draw [line width=2pt,color=color2] (507.5,171.5)-- (510.5,174.5);
\draw [line width=2pt,color=color2] (510.5,174.5)-- (519.5,171.5);
\draw [line width=2pt,color=color2] (519.5,171.5)-- (507.5,171.5);

\draw (243,-6) node [anchor=north west][inner sep=0.75pt]    {$Q/2$};

\draw (-6,500) node [anchor=north west][inner sep=0.75pt]    {$Q/2$};

\draw (500,-6) node [anchor=north west][inner sep=0.75pt]    {$Q$};

\end{axis}
\end{tikzpicture}
    \caption{Sierpi\'nski triangles in the $ia$-plane. The red triangles are regions of the Siepri\'nski triangle representing even $\binom{i}{a}$, and the blue triangles are regions of the Sierpi\'nski triangle representing even $\binom{j}{b}$. The gray triangle bounds $A$.}
    \label{fig:sierp}
\end{figure}
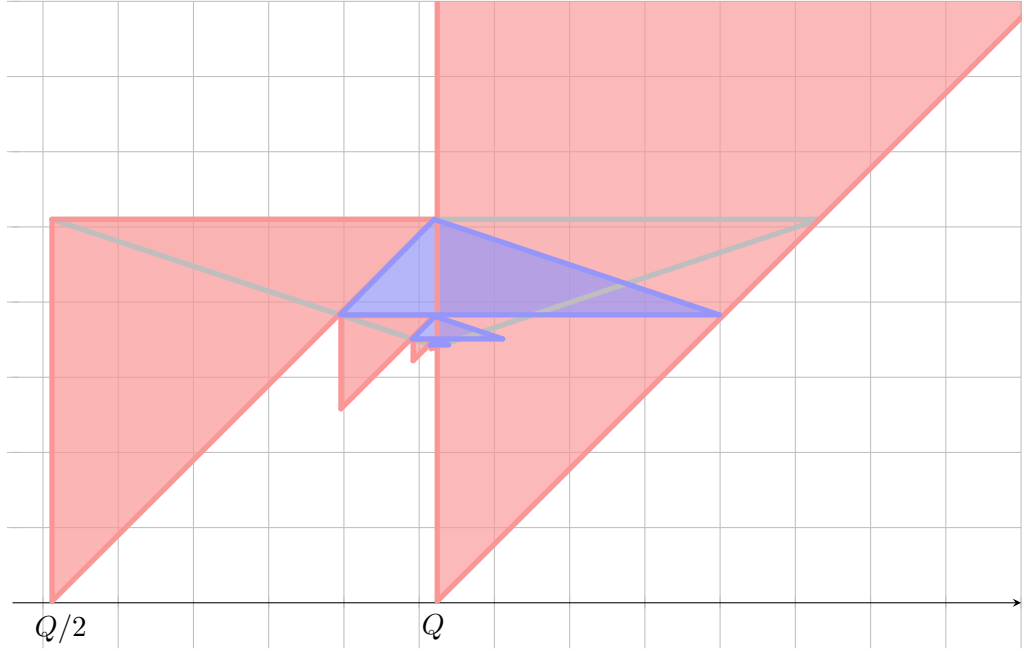

\Cref{lem:lucas,lem:triangles} together describe the triangular regions in the $ia$-plane for which $\binom{i}{a}$ is even, \Cref{lem:j-b-triangles} performs the coordinate transformation from the $jb$-coordinates to the $ia$-coordinates, and \Cref{lem:tiling} shows that our collection of subsets from the two copies of Sierpi\'nski's triangle together tile $A$.

\begin{lemma}[Lucas's theorem, \cite{Luc78}]
\label{lem:lucas}
Let $n$ and $r$ be nonnegative integers with binary expansions
$n = \sum_{i \geq 0} a_i 2^i$ and $r = \sum_{i \geq 0} b_i 2^i$. Then the
binomial coefficient $\binom{n}{r}$ is odd if and only if $b_i \leq a_i$ for all
$i \geq 0$.
\end{lemma}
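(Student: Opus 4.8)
The plan is to prove the mod-$2$ statement directly by a generating-function argument in the polynomial ring $\FF_2[X]$; the general-$p$ version of Lucas's theorem is classical, but only $p = 2$ is needed here. The engine is the ``freshman's dream'' identity $(1+X)^{2^i} = 1 + X^{2^i}$, valid in any commutative ring of characteristic $2$ — it follows from the Frobenius being a ring homomorphism, or directly from $\binom{2^i}{j}$ being even for $0 < j < 2^i$.

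First I would write $n = \sum_{i \geq 0} a_i 2^i$ with each $a_i \in \{0,1\}$, a finite sum. Then, working in $\FF_2[X]$,
\begin{equation*}
  (1+X)^n = \prod_{i \geq 0} \bigl((1+X)^{2^i}\bigr)^{a_i} = \prod_{i\,:\,a_i = 1} \bigl(1 + X^{2^i}\bigr).
\end{equation*}
Expanding the right-hand product, a monomial $X^r$ arises exactly by choosing, for each $i$ with $a_i = 1$, either the term $1$ or the term $X^{2^i}$; the resulting exponent is $\sum_{i \in S} 2^i$, where $S \subseteq \{i : a_i = 1\}$ records the choices of $X^{2^i}$. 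By uniqueness of binary expansions, the exponent $r = \sum_{i \geq 0} b_i 2^i$ is attained by such a choice if and only if $b_i = 1$ forces $a_i = 1$ — that is, $b_i \leq a_i$ for all $i$ — and in that case the choice $S = \{i : b_i = 1\}$ is the unique one producing $X^r$. Hence the coefficient of $X^r$ in $(1+X)^n$, computed in $\FF_2[X]$, equals $1$ precisely when $b_i \leq a_i$ for all $i$, and equals $0$ otherwise.

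Finally I would observe that this coefficient is, by the binomial expansion over $\ZZ$ followed by reduction modulo $2$, exactly $\binom{n}{r} \bmod 2$. Comparing the two computations gives the claim. There is no genuine obstacle here: the only points requiring (minimal) care are that the product over $i$ is finite because $n$ has finitely many nonzero binary digits, and that uniqueness of binary representation is precisely what guarantees each $X^r$ is hit at most once, so that no additional cancellation in characteristic $2$ can occur beyond what the digit condition $b_i \leq a_i$ records. (One can alternatively phrase this via Kummer's theorem — $\binom{n}{r}$ is odd iff adding $r$ and $n-r$ in binary produces no carries — but the generating-function proof above is self-contained and is the route I would take.)
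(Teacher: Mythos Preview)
Your proof is correct: the generating-function argument in $\FF_2[X]$ using $(1+X)^{2^i} = 1 + X^{2^i}$ is the standard modern route to the $p=2$ case of Lucas's theorem, and your handling of uniqueness of binary expansions to rule out cancellation is exactly right.

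The paper, however, does not prove this lemma at all --- it simply states the result with a citation to Lucas's original 1878 paper and moves on. So there is nothing to compare: you have supplied a complete self-contained proof where the paper relies on the classical reference. Your approach is in fact the one most commonly given in contemporary expositions.
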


\begin{lemma}
  \label{lem:triangles}
  For any integers $s, t, u, n, r \geq 0$ satisfying $s 2^t \leq n < (s + 1) 2^t - 1$ and $n + 1 - (s - u) 2^t \leq r < (u + 1) 2^t$, the binomial coefficient $\binom{n}{r}$ is even.
\end{lemma}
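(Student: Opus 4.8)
The strategy is to reduce everything to a single application of Lucas's theorem (\Cref{lem:lucas}): to show $\binom{n}{r}$ is even it is enough to produce one binary position at which $r$ has digit $1$ and $n$ has digit $0$. The point of the hypotheses is that they lock the relevant information into the bottom $t$ bits of $n$ and of $r$, and all the action happens there.

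Concretely, I would first write $n = s2^t + m$. The assumption $s2^t \le n < (s+1)2^t - 1$ forces $0 \le m \le 2^t - 2$, so $m$ is precisely the number formed by the lowest $t$ binary digits of $n$ (there are no carries, since $s2^t$ is supported on positions $\ge t$), and --- importantly --- $m$ is \emph{not} $2^t-1$. Next I would set $m' = r - u2^t$. From $r \ge n + 1 - (s-u)2^t = u2^t + m + 1$ one gets $m' \ge m+1 \ge 1$, and from $r < (u+1)2^t$ one gets $m' \le 2^t - 1$; this holds regardless of the sign of $s-u$. Hence $u2^t \le r < (u+1)2^t$, so $m'$ is exactly the number formed by the lowest $t$ binary digits of $r$, and we have arrived at the clean inequality $0 \le m < m' \le 2^t - 1$.

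The remaining step is elementary. Since $m$ and $m'$ are distinct integers in $[0, 2^t-1]$ with $m < m'$, let $i_0 \le t-1$ be the highest bit in which their binary expansions disagree; because they agree in all positions above $i_0$ and $m < m'$, the digit of $m$ in position $i_0$ must be $0$ and that of $m'$ must be $1$. Transporting this back through the identifications above, position $i_0$ carries digit $0$ in $n$ and digit $1$ in $r$, so by \Cref{lem:lucas} the coefficient $\binom{n}{r}$ is even.

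I do not anticipate a genuine obstacle here: the argument is short once the hypotheses are unwound. The only things to be careful about are the bookkeeping that makes the ``lowest $t$ bits'' identification legitimate --- in particular the strict bound $n < (s+1)2^t - 1$, which is exactly what guarantees $m \neq 2^t - 1$ and hence leaves room for a bit where $m'$ has a $1$ above a $0$ of $m$ --- and the degenerate case $t = 0$, where the hypothesis on $n$ cannot be satisfied and the statement is vacuous.
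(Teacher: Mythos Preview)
Your proof is correct and follows essentially the same approach as the paper: write $n=s2^t+m$ with $0\le m\le 2^t-2$, identify the bottom $t$ bits of $r$ as an integer $m'$ with $m<m'\le 2^t-1$, and invoke Lucas at a bit where $m$ has $0$ and $m'$ has $1$. Your exposition is in fact a bit cleaner than the paper's, which writes the slightly misleading ``$m<r<2^t$'' where it means the low $t$ bits of $r$.
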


\begin{proof}
  In the case that $t = 0$, the statement is vacuously true. We will assume that
  $t > 0$. Observe that $n = s 2^t + m$ for some $0 \leq m < 2^t - 1$, and that
  $r = m + c$ for some $u 2^t + 1 \leq c < (u + 1) 2^t - m$. Denote the binary
  expansions of $n$ and $r$ by $n = \sum_{i\geq 0} a_i 2^i$ and
  $r = \sum_{i\geq 0} b_i 2^i$, with $0 \leq a_i, b_i \leq 1$. Note that
  $m = \sum_{i=0}^{t-1} a_i 2^i$, and so by the bounds on $m$, $a_i = 0$ for at
  least one $i < t$. Because $m < r < 2^t$, there is at least one $i$ such that
  $b_i = 1$ and $a_i = 0$. By \Cref{lem:lucas}, we conclude that $\binom{n}{r}$
  is even.
\end{proof}

\begin{lemma}
  \label{lem:j-b-triangles}
  For each $1 \leq n \leq \ell - 2$ and for all $i \equiv a \pmod{2}$ satisfying
   \begin{align*}
     i &\leq (2^{2n} + 1) 2^{2(\ell-n)} - 3 a - 6, \\
   a &< i + 2 + \frac{1}{3} \left( 1 - 2^{2n} \right) 2^{2(\ell-n)}, \\
   a &\geq \frac{1}{3} \left( 2^{2n+1} + 1 \right) 2^{2(\ell-n-1)},
  \end{align*}
  the binomial coefficient
  $\displaystyle \binom{j}{b} = \binom{\frac{6Q - 6 - 3a - i}{2}}{\frac{4Q - 2 + a - i}{2}}$
  is even. 
  The regions cut out by these inequalities correspond exactly to those regions described in \Cref{lem:triangles}, up to a coordinate transformation from the $jb$-plane to the $ia$-plane.
\end{lemma}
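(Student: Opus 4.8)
The plan is to read off the evenness of $\binom{j}{b}$ directly from \Cref{lem:triangles} by producing, for each $n$ with $1 \le n \le \ell-2$, an explicit choice of the parameters $s,t,u$ appearing there, and then verifying that the hypotheses of that lemma --- which are inequalities in the pair $(j,b)$ --- translate, via the affine relations $2j = 6Q-6-3a-i$ and $2b = 4Q-2+a-i$ (where $Q = 2^{2\ell-1}$, so $2Q = 2^{2\ell}$ and $4Q = 2^{2\ell+1}$), into exactly the three displayed inequalities in $(i,a)$. Concretely I would take
\[
  t = 2(\ell-n)-1, \qquad s = 2^{2n+1}-1, \qquad u = \tfrac{1}{3}\bigl(5\cdot 4^{n}-2\bigr),
\]
so that $s-u = \tfrac{1}{3}(4^{n}-1)$. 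Since $3 \mid 4^{n}-1$, all of $s,t,u$ are nonnegative integers with $u \le s$ and $t \ge 3$; the hypothesis $1\le n\le\ell-2$ is what keeps these in range and keeps the appeal to \Cref{lem:triangles} non-vacuous. Note also that the standing congruence $i \equiv a \pmod 2$ forces $j$ and $b$ to be nonnegative integers, so $\binom{j}{b}$ and \Cref{lem:triangles} are meaningful.

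Next I would substitute these values into the four hypotheses $s2^{t} \le j$, $\;j < (s+1)2^{t}-1$, $\;j+1-(s-u)2^{t} \le b$, $\;b < (u+1)2^{t}$ of \Cref{lem:triangles} (with $j$ in the role of its $n$ and $b$ in the role of its $r$) and clear the denominator $2$. I expect $s2^{t}\le j$ to become precisely $i \le (2^{2n}+1)2^{2(\ell-n)} - 3a - 6$, and $b < (u+1)2^{t}$ to become precisely $a < i + 2 + \tfrac13(1-2^{2n})2^{2(\ell-n)}$ (using $\tfrac13(1-2^{2n}) = -\tfrac13(4^{n}-1)$). The hypothesis $j+1-(s-u)2^{t}\le b$ should clear, via $b - j = 2a - Q + 2$, to $2a \ge 2^{2\ell-1} - 1 - \tfrac13(4^{n}-1)2^{2\ell-2n-1}$; since the subtracted term is even (as $n\le\ell-1$) the right side is odd, so an integer $a$ satisfies this if and only if $a \ge \tfrac13(2^{2n+1}+1)2^{2(\ell-n-1)}$, which is the third displayed inequality. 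Finally, the remaining hypothesis $j < (s+1)2^{t}-1$ is redundant: adding the cleared forms of the fourth and third inequalities and using $(2^{2n+1}+1)+(4^{n}-1) = 3\cdot 4^{n}$ yields $3a+i > 2^{2\ell}-2$, hence $2j = 3\cdot 2^{2\ell}-6-(3a+i)\le 2^{2\ell+1}-5$, i.e.\ $j \le 2^{2\ell}-3 < (s+1)2^{t}-1$. With all four hypotheses verified, \Cref{lem:triangles} gives that $\binom{j}{b}$ is even, and the equivalences just obtained are exactly the asserted correspondence of regions under the change of coordinates $(i,a)\leftrightarrow(j,b)$.

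The only real obstacle is bookkeeping, of two flavors. First, reverse-engineering the correct triple $(s,t,u)$ and matching each hypothesis of \Cref{lem:triangles} to the right displayed inequality: the arithmetic identities $2\cdot\tfrac13(4^{n}-1)+\tfrac23(2^{2n+1}+1) = 2^{2n+1}$ and $(2^{2n+1}+1)+(4^{n}-1)=3\cdot 4^{n}$ are precisely what make the affine substitutions collapse to the stated (rather than to messier) inequalities, and getting them exactly right requires care. Second, parity tracking: after dividing by $2$ one must check that each translated endpoint has the parity that sends the strict/non-strict integer inequality to the stated bound and not to a neighbour off by $1$ --- the passage from $2a \ge (\text{odd})$ to $a \ge \tfrac13(2^{2n+1}+1)2^{2(\ell-n-1)}$ being the representative instance. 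Beyond \Cref{lem:triangles} and \Cref{lem:lucas} there is nothing conceptually new here; this lemma is exactly the coordinate-change step transporting the ``$jb$-plane'' Sierpi\'nski holes into the $ia$-plane.
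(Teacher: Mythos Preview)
Your proposal is correct and follows essentially the same approach as the paper: both choose the identical parameters $t = 2(\ell-n)-1$, $s = 2^{2n+1}-1$, $u = \tfrac{1}{3}(5\cdot 2^{2n}-2)$ and reduce to \Cref{lem:triangles} via the affine change of variables between $(i,a)$ and $(j,b)$. If anything you are more careful than the paper, which simply transcribes the three hypothesis inequalities into $(j,b)$-form and invokes \Cref{lem:triangles} without commenting on the fourth constraint $j < (s+1)2^t - 1$ or on the parity rounding in the third inequality; your explicit verification that $3a+i > 2^{2\ell}-2$ (hence $j \le 2^{2\ell}-3$) and your handling of the odd right-hand side in $2a \ge Q - 1 - (s-u)2^t$ fill in exactly those omitted details.
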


\begin{proof}
  Taking $j = \frac{6Q - 6 - 3a - i}{2}$ and $b = \frac{4Q - 2 + a - i}{2}$, we can solve for $i$ and $a$ in terms of $j$ and $b$. Then, substituting these expressions into the inequalities of the hypothesis, we can write these inequalities in the form
   \begin{align*}
     j &\geq \left( 2^{2n+1} - 1 \right) 2^{2(\ell-n)-1}, \\
     b &< \frac{1}{3} \left( 5 \cdot 2^{2n} + 1 \right) 2^{2(\ell-n)-1}, \\
     b &\geq j + 2 + \frac{1}{3} \left( 1 - 2^{2n} \right) 2^{2(\ell-n)-1}.
   \end{align*}
  Applying \Cref{lem:triangles} with $t = 2 (\ell - n) - 1$, $s = 2^{2n+1} - 1$,
  and $u = \frac{1}{3} \left( 5 \cdot 2^{2n} - 2 \right)$, we conclude that
  $\binom{j}{b}$ is even.
\end{proof}

\begin{lemma}
\label{lem:extra-line}
When $i = Q-1$ the binomial coefficient $\binom{j}{b}$ is even
\end{lemma}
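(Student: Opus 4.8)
The plan is to specialize the formulas for $j$ and $b$ to $i = Q - 1$, pin down the residue of $a$ modulo $4$, and then reduce the parity of $\binom{j}{b}$ to a short argument about binary digits via \Cref{lem:lucas}.

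First I would substitute $i = Q - 1$ into $j = \frac{6Q-6-3a-i}{2}$ and $b = \frac{4Q-2+a-i}{2}$, obtaining $j = \frac{5Q-5-3a}{2}$, $b = \frac{3Q-1+a}{2}$, and $j - b = Q - 2 - 2a$. Integrality of $j,b$ forces $a$ odd, and a short computation gives the exponent $k = 3a - Q + 2$; since $Q = 2^{2\ell-1} \equiv 0 \pmod 4$, the basis condition $k \equiv 1 \pmod 4$ then forces $a \equiv 1 \pmod 4$. For $\ell \le 2$ this is incompatible with \cref{eq:conditions-i-a}, so the statement is vacuous there and we may assume $\ell \ge 3$. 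In particular $a \ne 2^{2\ell-2}-1$, so the bound $a \le 2^{2\ell-2}-1$ is strict and $j - b$ is a positive integer congruent to $4$ modulo $8$. I would then write $j - b = 4r$ with $r$ a positive odd integer and record two identities: $b = 7\cdot 2^{2\ell-3} - 1 - r$, and (rearranging $j < 2Q$) $3r \le 2^{2\ell-3}$, hence $r < 2^{2\ell-4}$.

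Next I would read off binary digits. Writing $r = \sum_{p=0}^{2\ell-5}\epsilon_p 2^p$ with $\epsilon_0 = 1$, the inequality $r < 2^{2\ell-4}$ guarantees no borrow past position $2\ell-4$ in $b = (2^{2\ell-1}+2^{2\ell-2}) + (2^{2\ell-3}-1-r)$, so $b$ has binary digit $1$ in positions $2\ell-1$, $2\ell-2$, $2\ell-4$, digit $0$ in position $2\ell-3$, and digit $1 - \epsilon_p$ in each position $p \le 2\ell-5$, while $j - b = 4r$ has digit $\epsilon_{q-2}$ in each position $q$ with $2 \le q \le 2\ell-3$. Since $b + (j-b) = j$, \Cref{lem:lucas} says $\binom{j}{b}$ is even unless the binary expansions of $b$ and $j-b$ share no common $1$. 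I would rule the latter out: a pair sharing no $1$ would force $\epsilon_{2\ell-6} = 0$ (from position $2\ell-4$, where $b$ has a $1$) and the implications $\epsilon_{q-2} = 1 \Rightarrow \epsilon_q = 1$ for $q \in \{2,4,\dots,2\ell-6\}$ (from the remaining positions), so that iterating from $\epsilon_0 = 1$ forces $\epsilon_{2\ell-6} = 1$, a contradiction. (When $\ell = 3$ there are no intermediate positions and the position-$(2\ell-4)$ constraint already reads $\epsilon_0 = 0$, contradicting $r$ odd.) Therefore $\binom{j}{b}$ is even.

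The one step that takes genuine care is the digit bookkeeping in the last paragraph — justifying the claimed binary expansions of $b$ and $4r$, which is exactly where the inequality $r < 2^{2\ell-4}$ (equivalently $j < 2Q$) gets used — together with confirming the boundary cases $\ell \le 3$. Everything after that is immediate from \Cref{lem:lucas}.
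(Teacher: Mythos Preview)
Your proof is correct. It takes a different route from the paper's: the paper parametrizes by $m = 2Q - j$, restricts via $j \equiv 4 \pmod 6$ to the range $6 \leq m \leq Q/2 - 4$, and then aims to locate a single bit position at which $b = (5Q-4+m)/3$ carries a $1$ while $j = 2Q - m$ carries a $0$. You instead parametrize by $r = (j-b)/4$, extract from $k \equiv 1 \pmod 4$ the congruence $a \equiv 1 \pmod 4$ (hence $r$ odd), write out the binary expansions of $b$ and $j-b$ explicitly, and use the Kummer form of \Cref{lem:lucas} together with the inductive chain $\epsilon_0 = 1 \Rightarrow \epsilon_2 = 1 \Rightarrow \cdots \Rightarrow \epsilon_{2\ell-6} = 1$ to force a shared $1$-bit at position $2\ell-4$. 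What your approach buys is fully explicit bit bookkeeping and clean handling of the boundary cases $\ell \leq 3$; the paper's argument is terser but, as written, contains slips (for instance ``$i \equiv 4 \pmod 6$'' should refer to $j$, and the ``second most significant bit'' assertion for $2Q-m$ does not hold literally in the stated range), so the extra care in your write-up is not misplaced.
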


\begin{proof}
We have that $j \leq 2Q-2$. Additionally, substituting $Q-1$ for $i$ in the equation $2i+2j \geq 5Q-5$ gives us $ j \geq \frac{3Q}{2}-1$. Now because $i \equiv 4 \mod(6)$, we can restrict these bounds to $\frac{3Q}{2}+4 \leq j \leq 2Q -6$. 

Now let $j = 2Q - m$. The above bounds give us $6 \leq m \leq \frac{Q}{2}-4$. Plugging this into $\binom{j}{b}$ we get $\displaystyle \binom{2Q-m}{\frac{5Q-4+m}{3}}$. Now because $m \leq \frac{Q}{2}$, we know that $2Q-m$ has a zero in the second most significant bit. But $\frac{5Q}{3}$  retains the one at that position, which is unchanged after adding $\frac{m-4}{3}$. We conclude from \Cref{lem:lucas} that $\binom{j}{b}$ is even.
\end{proof}

\begin{lemma}
\label{lem:tiling}

    For all $i \equiv a \pmod{2}$ with $(i,a)\in A$, the product of binomial coefficients
  $\displaystyle \binom{i}{a} \binom{j}{b} = \binom{i}{a} \binom{\frac{6Q - 6 - 3a - i}{2}}{\frac{4Q - 2 + a - i}{2}}$
  is even.
\end{lemma}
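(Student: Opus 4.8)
The plan is to turn \Cref{lem:tiling} into a purely combinatorial covering problem and then solve that problem by fitting together the explicit triangular regions supplied by \Cref{lem:triangles,lem:j-b-triangles,lem:extra-line}. Since $\characteristic F = 2$, the product $\binom{i}{a}\binom{j}{b}$ is even as soon as one of its two factors is; so it suffices to show that every lattice point $(i,a)$ with $(i,a)\in A$ and $i\equiv a \pmod 2$ satisfies $\binom{i}{a}\equiv 0$ or $\binom{j}{b}\equiv 0$, where $j$ and $b$ are the quantities attached to $(i,a)$ above. Unwinding \cref{eq:conditions-i-a}, the region $A$ is the lattice triangle with apex near $(Q,\,Q/3)$ and base the horizontal segment $\{\,Q/2+1\le i\le 3Q/2-2,\ a=Q/2-1\,\}$, where $Q=2^{2\ell-1}$; this is the gray triangle of \Cref{fig:sierp}. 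By \Cref{lem:triangles} the coefficient $\binom{i}{a}$ is even on a family of ``red'' triangular regions, by \Cref{lem:j-b-triangles} the coefficient $\binom{j}{b}$ is even on a family of ``blue'' triangular regions, and by \Cref{lem:extra-line} it is even on the line $i=Q-1$; the entire content of the lemma is therefore that these regions cover $A$.

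To exhibit the cover I would argue recursively, exploiting the self-similarity of Sierpi\'nski's triangle. First, applying \Cref{lem:triangles} with $s=1$, $t=2\ell-1$, $u=0$ produces a red region containing every point of $A$ with $i\ge Q$: the only nontrivial check is that the right edge $a=(i-1)/3$ of $A$ lies on or above the line $a=i+1-Q$, which holds because equality occurs exactly at the rightmost vertex $i=3Q/2-2$ of $A$. The line $i=Q-1$ is disposed of by \Cref{lem:extra-line}. For the remaining part $A\cap\{i\le Q-2\}$ I would first cover the points with $i$ at most $\frac{7Q-10}{8}$ by the red region of \Cref{lem:triangles} with $s=1$, $t=2\ell-2$, $u=0$; what is left is a thin triangle hugging the apex of $A$. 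One then peels this down in $\ell-2$ steps: at step $n$ the blue region $B_n$ of \Cref{lem:j-b-triangles} covers its points of large $a$, a red region of \Cref{lem:triangles} at one quarter the scale covers its points of small $a$, and the two together leave a thin apex triangle of the same shape shrunk by a factor of four. After the last blue region $B_{\ell-2}$ has been used the leftover apex triangle has width bounded by an absolute constant (the scale having dropped to $Q/4^{\ell-2}$), so it is absorbed into the already-covered set $\{\,i\ge Q-1\,\}$ together with the final red regions of the chain.

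The real work is in the inductive step, and that is where I expect the main difficulty. For each level $n$ one must write down the precise parameters $(s,t,u)$ feeding \Cref{lem:triangles}, check that the union of the resulting red region and the blue region $B_n$ really contains the entire annular piece of $A$ between two consecutive apex triangles with no gap at the seam where red meets blue, and verify that the shrunken apex triangle is again of the same shape so that the recursion may continue --- all the while keeping the congruence constraint $i\equiv a\pmod 2$ (and the $\pmod 3$ condition implicit in \Cref{lem:relations-a-b}) in play, since it is precisely these constraints that keep the boundary lattice points from slipping between the pieces. The two sources of friction I anticipate are the off-by-one reconciliations of the inequalities of \Cref{lem:triangles} against those of \Cref{lem:j-b-triangles} at each overlap, and the terminal case, where one must make the innermost apex triangle small enough that the already-used regions genuinely finish the job --- this last point is essentially what \Cref{lem:extra-line} is for. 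Once the cover is verified the lemma is immediate: on the red pieces $\binom{i}{a}$ is even, on the blue pieces and on the line $i=Q-1$ the coefficient $\binom{j}{b}$ is even, so in every case $\binom{i}{a}\binom{j}{b}\equiv 0$.
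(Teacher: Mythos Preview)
Your plan is essentially the paper's: cover the region $A$ by the large red triangle for $i\ge Q$, the line $i=Q-1$ from \Cref{lem:extra-line}, and a sequence of red/blue pieces from \Cref{lem:triangles,lem:j-b-triangles} for the remaining portion $A\cap\{i\le Q-2\}$. The paper carries out this last step not recursively but all at once: it names the red and blue regions $Q_n,Q'_n$ for $1\le n\le \ell-1$, observes that each pair $Q_n\cup Q'_n$ contains an explicit rectangle $R_n$ (because the sloped lower edge of $Q_n$ is immediately adjacent to the sloped upper edge of $Q'_n$), and then checks that the truncated trapezoids $R'_n\subseteq R_n$ stack vertically in the $a$-direction to cover the triangle $T=A\cap\{i\le Q-2\}$.

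This organizational difference matters for one point in your outline. Your claim that each step ``leaves a thin apex triangle of the same shape shrunk by a factor of four'' is not literally correct: after removing your first red region $Q_1$ the leftover piece of $A\cap\{i\le Q-2\}$ is bounded above by the slope-$1$ line $a=i-Q/2$, not by a horizontal line, so it is not similar to $A$, and a naive self-similarity recursion will not close. The paper's rectangle-stacking avoids this by never asserting self-similarity --- it simply verifies that the $a$-interval of $R'_{n+1}$ abuts that of $R'_n$ with no gap, which is a one-line arithmetic check. If you rewrite your inductive step to track the explicit $a$-range covered at each stage rather than the shape of the residual region, your argument becomes exactly the paper's.
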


\begin{proof}
  For each $1 \leq n \leq \ell - 1$, define the sets
  \begin{gather*}
    Q_n = \left\{ (i, a)
      \ \Bigg\vert \ 
      \begin{gathered}
        i \geq \left( 2^{2n-1} - 1 \right) 2^{2(\ell-n)}, \\
        i + 1 - \frac{1}{3} \left( 2^{2n} - 1 \right) 2^{2(\ell-n)} \leq a < \frac{1}{3} \left( 2^{2n-1} + 1 \right) 2^{2(\ell-n)}
      \end{gathered}
    \right\}, \\
    Q'_n = \left\{ (i, a)
      \ \Bigg\vert \ 
      \begin{gathered}
        i \leq (2^{2n} + 1) 2^{2(\ell-n)} - 3 a - 6, \\
        \frac{1}{3} \left( 2^{2n+1} + 1 \right) 2^{2(\ell-n-1)} \leq a < i + 2 - \frac{1}{3} \left( 2^{2n} - 1 \right) 2^{2(\ell-n)}, \\
      \end{gathered}
    \right\}.
  \end{gather*}
  The first key observation is that, because the lower bound on $a$ for $Q_n$ is immediately adjacent to the upper bound on $a$ for $Q'_n$, the sets $Q_n$ and $Q'_n$ together cover a rectangle in the plane, given by
  \begin{equation*}
    Q_n \cup Q'_n \supseteq
    R_n =
    \left\{ (i, a) \ \Bigg\vert \ 
      \begin{gathered}
        \left( 2^{2n-1} - 1 \right) 2^{2(\ell-n)}
        \leq i
        \leq 2^{2\ell-1} - 2 \\
        \frac{1}{3} \left( 2^{2n+1} + 1 \right) 2^{2(\ell-n-1)}
        \leq a
        < \frac{1}{3} \left( 2^{2n-1} + 1 \right) 2^{2(\ell-n)}
      \end{gathered}
    \right\},
  \end{equation*}
  where the bounds on $i$ are inferred by arithmetic from the bounds on $Q_n$ and $Q'_n$.
  Next, we truncate each of these rectangular regions into a trapezoid as
  follows:
  \begin{equation*}
    R_n \supseteq
    R_n' =
    \left\{ (i, a) \ \Bigg\vert \ 
      \begin{gathered}
        2^{2\ell} - 2 - 3 a
        \leq i
        \leq 2^{2\ell-1} - 2 \\
        \frac{1}{3} \left( 2^{2n+1} + 1 \right) 2^{2(\ell-n-1)}
        \leq a
        \leq \frac{1}{3} \left( 2^{2n-1} + 1 \right) 2^{2(\ell-n)} - 1
      \end{gathered}
    \right\}.
  \end{equation*}
  The second observation is that these trapezoids all share a common right edge,
  and that the lower bound on $a$ for $R'_n$ is immediately adjacent to the upper bound on $a$ for $R'_{n+1}$ (not missing any
  integral values of $a$). Therefore, the union of all of the $R'_n$ contains the
  region
  \begin{equation*}
    \bigcup_{n=1}^{\ell-1} R'_n \supseteq
    T =
    \left\{ (i, a) \ \Bigg\vert \
      \begin{gathered}
        2^{2\ell} - 2 - 3 a \leq i \leq 2^{2\ell-1} - 2, \\
        \frac{1}{3} \left(2^{2\ell-1} - 1 \right) \leq a \leq 2^{2\ell-2} - 1
      \end{gathered}
    \right\}.
  \end{equation*}
  \Cref{lem:triangles,lem:j-b-triangles} tell
  us that, for each $1 \leq n \leq \ell - 1$, $\binom{i}{a}$ is even for all $(i, a) \in Q_n$ and
  $\displaystyle \binom{\frac{6Q - 6 - 3a - i}{2}}{\frac{4Q - 2 + a - i}{2}}$ is
  even for each $(i, a) \in Q'_n$. Since $T$ is covered by the $Q_n$ and the
  $Q'_n$, we conclude that the product of the binomial coefficients is even for
  all $(i, a) \in T$.

  By \Cref{lem:extra-line}, we have that the binomial coefficient
  $\displaystyle \binom{\frac{6Q - 6 - 3a - i}{2}}{\frac{4Q - 2 + a - i}{2}}$ is
  even when $i = 2^{2\ell-1} - 1$ and
  $\frac{1}{6} \left( 2^{2\ell} - 3 \right) \leq a \leq 2^{2\ell-2} - 1$.

  Finally, by \Cref{lem:triangles}, the binomial coefficient $\binom{i}{a}$ is
  even for $i \equiv a \pmod{2}$ satisfying
  $2^{2\ell-1} \leq i \leq 2^{2\ell}-2$ and
  $i + 1 - 2^{2\ell-1} \leq a < 2^{2\ell-1}$. It follows that
  $\binom{i}{a}$ is even for all $i$ and $a$ in the region
  \begin{equation*}
    T' = \left\{ (i, a) \ \Bigg\vert \ 
      \begin{gathered}
        2^{2\ell-1} \leq i \leq 3a + 1 \\
        a \leq 2^{2\ell-2} - 1
      \end{gathered}
    \right\}.
  \end{equation*}

  Observe that every $i \equiv a \pmod{2}$ satisfying the conditions
  in \cref{eq:conditions-i-a} is contained in either $T$, $T'$, or the line
  $i = 2^{2\ell-2} - 1$. We conclude that the product of binomial coefficients
  $\displaystyle \binom{i}{a} \binom{\frac{6Q - 6 - 3a - i}{2}}{\frac{4Q - 2 + a
      - i}{2}}$ is even.
\end{proof}

The next result immediately follows from \Cref{lem:tiling} and the preceding discussion.

\begin{theorem}
  The multiplication by $v$ map $\hO_{6Q-5} \to \hO_{12Q-3}$ induces a well--defined map on the quotient $W / W_0 \to \hO_{12Q-3}$. Hence, the map $\psi$ in (\ref{eq:diagram}) is well--defined.
\end{theorem}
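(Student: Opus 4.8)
This is in essence a bookkeeping corollary of \Cref{lem:tiling}, so the plan is simply to assemble the pieces already in place. As explained in the discussion surrounding the diagram \eqref{eq:diagram}, the left-hand square commutes and $\psi$ is well-defined as soon as one knows that $v \cdot w = 0$ in $\hO_{12Q-3}$ for every $w$ in the spanning set of $W_0$ recorded in \Cref{def:w0} of \Cref{def:subspace}: once $v$ annihilates all of $W_0$, the composite $W_0 \hookrightarrow W \to W' \oplus \hO_{12Q-3}$ lands in $h_\alpha W_0 \oplus 0$, which is exactly the condition required. So the whole proof reduces to checking $v \cdot W_0 = 0$ on generators.

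First I would dispatch the exceptional generator $[0, Q-2] z^{4Q-1}$: as already noted, each of its monomials has $z$-degree at least $7Q - 1 > 4Q - 1$, so its product with $v$ vanishes in $\hO = F[x,y,z]/(x^{4Q}, y^{4Q}, z^{4Q})$. For a general generator $[i,j] z^k$ of $W_0$ --- so $i < j < 2Q$, $j \neq 2Q-1$, $k < 4Q$, $k \equiv 1 \pmod 4$, $2i + 2j + k = 6Q-5$ --- I would feed it through the unnamed lemma preceding \Cref{lem:relations-a-b} to write $v \cdot [i,j] z^k = c\,(xyz)^{4Q-1}$ with $c$ the coefficient of $x^{4Q-2} y^{Q-2} z^{Q-1}$ in $[i,j] z^k$, and then invoke \Cref{lem:relations-a-b} to identify $c = \binom{i}{a}\binom{j}{b}$ (vanishing unless $i \equiv j \pmod 3$), where $a$ and $b$ are the explicit affine functions of $i, j$ given there. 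Reparametrizing by the pair $(i,a)$ --- legitimate precisely when $i \equiv a \pmod 2$, which is forced for $j$ and $b$ to be integral --- the constraints $j \leq 2Q-2$, $k = 6Q - 5 - 2i - 2j \geq 0$, and $b \leq j$ translate into the inequalities \eqref{eq:conditions-i-a} cutting out the triangular region $A$ in the $ia$-plane. At this point \Cref{lem:tiling} applies verbatim and yields that $\binom{i}{a}\binom{j}{b}$ is even; since $\characteristic F = 2$ this gives $c = 0$, hence $v \cdot [i,j] z^k = 0$. Combining the two cases gives $v \cdot W_0 = 0$, and the theorem follows.

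Since every ingredient is already established, no step is a genuine obstacle; the one place demanding care is the change of variables in the second paragraph. One must confirm that translating the defining constraints of the $W_0$-generators into $(i,a)$-coordinates --- remembering to impose $i \equiv a \pmod 2$, to carry over the inequality $b \leq j$ from \Cref{lem:relations-a-b}, and to account for the excluded value $j = 2Q-1$ --- reproduces \eqref{eq:conditions-i-a} exactly, with the single intermediate line handled by \Cref{lem:extra-line} and the exceptional generator $[0, Q-2] z^{4Q-1}$ handled separately as above, so that no generator of $W_0$ slips through uncovered by \Cref{lem:tiling}.
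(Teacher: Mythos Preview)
Your proposal is correct and follows exactly the paper's own approach: the paper records the theorem as an immediate consequence of \Cref{lem:tiling} together with the discussion preceding it, and your write-up simply spells out that discussion in detail. The only quibble is that your separate mention of \Cref{lem:extra-line} is redundant, since that line is already absorbed into the statement and proof of \Cref{lem:tiling}; otherwise the argument matches the paper's.
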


\section{Tight closure for elements of finite escape time}\label{sec:noncontainment}

In this section we show that $f=y^3z^3\not\in I_\alpha^*$ for every $\alpha\in \overline{\FF_2}$ with finite escape time. It suffices to show that $\psi$ is injective. We first define a more general map $T$ and prove injectivity for this map.

Throughout, let $\alpha, t$ be elements of $\overline{\FF_2}$ and $Q=4^\ell/2$ where $\ell:=\ell_\alpha$.

\subsection{Matrix of the \texorpdfstring{$\psi$}{psi} map}\label{subsec:matrix}

By \cite[Theorem 4.6]{Mon98a} the $u_i \in W$ with $i$ odd and $1 \leq i \leq Q$ where $$u_i = [Q - i - 1, 2Q - 1] \cdot z^{2i - 1}$$ form a basis of the quotient $W / W_0$. We now show that $v \cdot u_i=0 \in \hO_{12Q-3}$ for all $i \neq 0$.

\begin{lemma}
\label{lem:v-u1}
We claim that $v \cdot u_1 = (xyz)^{4Q-1}$.
\end{lemma}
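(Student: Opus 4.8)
The plan is to compute the product $v \cdot u_1$ directly, where $v$ is the image of $xy f^Q = xy (y^3z^3)^Q = x y^{3Q+1} z^{3Q}$ in $\hO = F[x,y,z]/(x^{4Q}, y^{4Q}, z^{4Q})$, and $u_1 = [Q-2, 2Q-1] \cdot z = (A_x^{Q-2} A_y^{2Q-1} + A_x^{2Q-1} A_y^{Q-2}) z$, with $A_x = x^2 + yz$ and $A_y = y^2 + xz$. Since $v$ is a single monomial of degree $6Q+1$ and $u_1$ has degree $6Q-5$, the product lands in degree $12Q-4$; but wait — we want it in $\hO_{12Q-3}$. Let me recount: $v$ has $x$-degree $1$, $y$-degree $3Q+1$, $z$-degree $3Q$, total $6Q+2$. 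Hmm, then $v \cdot u_i$ would live in degree $12Q-3$ only if $u_i$ has degree $6Q-5$, which matches $2(Q-i-1) + 2(2Q-1) + (2i-1) = 6Q - 5$. Good. So the target monomial $(xyz)^{4Q-1}$ has degree $12Q - 3$ as required, and multiplying by $v$ and reducing mod $(x^{4Q}, y^{4Q}, z^{4Q})$ kills every term except the one whose $x$-exponent, $y$-exponent, $z$-exponent in $u_1$ are exactly $4Q-2$, $Q-2$, $Q-1$ — precisely the coefficient $c$ extracted in the lemma preceding \Cref{lem:relations-a-b}.

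So concretely, I would invoke \Cref{lem:relations-a-b}: for $u_1$ we have $i = Q-2$ and $j = 2Q-1$, and one checks $i \equiv j \pmod 3$ (indeed $Q - 2 \equiv 2Q - 1 \pmod 3$ reduces to $Q \equiv -1 \pmod 3$; since $Q = 4^\ell/2 = 2^{2\ell - 1}$ and $2^{2\ell-1} \equiv 2 \equiv -1 \pmod 3$, this holds). Then \Cref{lem:relations-a-b} gives
\[
  a = \frac{6Q - 6 - (Q-2) - 2(2Q-1)}{3} = \frac{Q - 2}{3},
  \qquad
  b = \frac{9Q - 6 - 2(Q-2) - (2Q-1)}{3} = \frac{5Q - 1}{3},
\]
and $v \cdot u_1 = \binom{Q-2}{a}\binom{2Q-1}{b}(xyz)^{4Q-1}$. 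It remains to show this product of binomial coefficients is odd, i.e. equals $1$ in $F$. For $\binom{2Q-1}{b}$: since $2Q - 1 = 2^{2\ell} - 1$ has all bits equal to $1$, Lucas's theorem (\Cref{lem:lucas}) makes $\binom{2Q-1}{b}$ odd for every $0 \le b \le 2Q-1$, and one checks $b = (5Q-1)/3 \le 2Q - 1$. For $\binom{Q-2}{a}$ with $a = (Q-2)/3$: here $Q - 2 = 2^{2\ell-1} - 2$, whose binary expansion is $1\underbrace{1\cdots1}_{2\ell-3}0$, i.e. all bits $1$ except the least significant. And $a = (Q-2)/3 = (2^{2\ell-1}-2)/3$; since $(2^{2k}-1)/3$ has binary expansion $\underbrace{01\,01\cdots01}_{k \text{ blocks}}$, we get $a = 2 \cdot (2^{2\ell-2}-1)/3$, whose binary expansion is $\underbrace{01\cdots01}_{\ell-1}0$ — every set bit of $a$ sits at an odd position, hence is among the set bits of $Q-2$ (all positions $\ge 1$). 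So by \Cref{lem:lucas}, $\binom{Q-2}{a}$ is odd as well. Multiplying, $v \cdot u_1 = (xyz)^{4Q-1}$, as claimed.

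The main obstacle I expect is the bookkeeping with binary expansions — getting the bit patterns of $Q-2 = 2^{2\ell-1}-2$ and of $a = (2^{2\ell-1}-2)/3$ exactly right, and confirming the containment of set bits, as well as double-checking the inequality $b \le 2Q - 1$ and the integrality (which follows from $i \equiv j \pmod 3$ via \Cref{lem:relations-a-b}). A cleaner alternative to the explicit bit-pattern argument would be to apply \Cref{lem:triangles} directly with suitable parameters $s, t, u$ chosen so that the point $(Q-2, a)$ — or equivalently, by the coordinate transformation of \Cref{lem:j-b-triangles}, the point $(2Q-1, b)$ — lies outside all the "even" triangular regions, confirming oddness by exclusion; but the direct Lucas computation is shorter here since $u_1$ is a single explicit point rather than a region. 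Either way, the computation is routine once the exponent arithmetic in \Cref{lem:relations-a-b} is carried out, so I would present it compactly: identify $(i,j) = (Q-2, 2Q-1)$, quote \Cref{lem:relations-a-b} for the formula, then finish with two applications of \Cref{lem:lucas}.
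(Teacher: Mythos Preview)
Your argument is correct and follows the same line as the paper's proof: extract the coefficient of $x^{4Q-2}y^{Q-2}z^{Q-1}$ in $u_1$ via the binomial expansion of $A_x^{2Q-1}A_y^{Q-2}z$, and verify the two resulting binomial coefficients are odd via Lucas's theorem.

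One point to tidy up: you invoke \Cref{lem:relations-a-b}, but that lemma is stated only for generators $[i,j]z^k$ of $W_0$, i.e.\ with $j \neq 2Q-1$, while $u_1$ has $j = 2Q-1$ (indeed the $u_i$ form a basis of $W/W_0$, not of $W_0$). The paper therefore redoes the expansion from scratch in this proof rather than citing \Cref{lem:relations-a-b}. Your citation is salvageable, since the proof of \Cref{lem:relations-a-b} uses only $i<j$ and $k>0$, both of which hold for $u_1$; but you should say so explicitly rather than quote the lemma outside its stated hypotheses.

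A smaller remark: the paper's Lucas argument for $\binom{Q-2}{(Q-2)/3}$ is shorter than your bit-pattern computation. Since $Q-2 = 2^{2\ell-1}-2$ has binary expansion $1\cdots10$ (all bits $1$ except bit $0$), and $(Q-2)/3$ is even, every set bit of the lower index automatically lies under a set bit of $Q-2$; no need to work out the expansion of $(Q-2)/3$.
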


\begin{proof} 
Since $\hO_{12Q - 3}$ is a one-dimensional $F$-vector space (by pigeonhole) we have $v \cdot u_i = c (xyz)^{4Q-1}$, where $c$ is the coefficient of the monomial $x^{4Q-2} y^{Q-2} z^{Q-1}$ in $u_i$.

We have  
\begin{align*}
    u_1 =\left( (x^2+yz)^{Q-2} (y^2+xz)^{2Q-1} + (x^2+yz)^{2Q-1} (y^2+xz)^{Q-2} \right) \cdot z.
\end{align*}

Consider the expansion of $(y^2 + xz)^{2Q-1}$. For any monomial in this expansion, the sum of the $y$- and the $z$- degree is at least $2Q - 1$, while in the desired monomial $x^{4Q-2} y^{Q-2} z^{Q-1}$ the sum of the $y$- and $z$- degree is only $2 Q - 3$. Thus, $(x^2+yz)^{Q-2} (y^2+xz)^{2Q-1}=0$. Now we expand, $$(x^2 + yz)^{2Q-1} (y^2 + xz)^{Q-2} z = \sum_{a=0}^{2Q-1} \sum_{b=0}^{Q-2} \binom{2Q-1}{a} \binom{Q-2}{b} x^{Q-2+2a-b} y^{2Q-1-a+2b} z^{3Q-2-a-b}.$$ Since only $x^{4Q-2} y^{Q-2} z^{Q-1}\neq 0$,  we must have
$$a = \frac{5Q - 1}{3}, \quad b = \frac{Q - 2}{3}.$$

So when $\binom{2Q-1}{a}, \binom{Q-2}{b}$ are each odd then we have $(x^2+yz)^{2Q-1} (y^2+xz)^{Q-2}\neq0$. Since $Q$ is a power of 2, we can write $2 Q - 1$ in binary as a sequence of only 1's. Then, by \Cref{lem:lucas}, $\binom{2Q-1}{a}$ is odd for any $a$. Now consider $\binom{Q-2}{b}$. Here we have that $Q - 2$ is a sequence of all 1's ending in 0 at the $2^0$ position. By \Cref{lem:lucas}, since $b$ is even, $\binom{Q-2}{b}$ is odd. We may conclude that $v\cdot u_1 = (xyz)^{4Q-1},$ as desired.
\end{proof}

\begin{lemma}
\label{lem:v-ui}
For $i > 1$, we have $v \cdot u_i = 0$.
\end{lemma}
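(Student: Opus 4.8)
The plan is to reproduce the strategy of the proof of \Cref{lem:v-u1}. Since $\hO_{12Q-3}$ is a one-dimensional $F$-vector space spanned by $(xyz)^{4Q-1}$, we have $v\cdot u_i=c_i\,(xyz)^{4Q-1}$, where $c_i$ is the coefficient of the monomial $x^{4Q-2}y^{Q-2}z^{Q-1}$ in $u_i=[Q-i-1,\,2Q-1]\cdot z^{2i-1}$; so it suffices to show $c_i$ is even.

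First I would dispose of the cases where this is automatic. Expanding $u_i=\bigl(A_x^{Q-i-1}A_y^{2Q-1}+A_x^{2Q-1}A_y^{Q-i-1}\bigr)z^{2i-1}$, the summand $A_x^{Q-i-1}A_y^{2Q-1}z^{2i-1}$ contributes no $x^{4Q-2}y^{Q-2}z^{Q-1}$ term: matching exponents in its multinomial expansion forces the exponent of $x^2$ drawn from $A_x^{Q-i-1}$ to equal $(4Q+i-3)/3>Q-i-1$, which is impossible. Moreover, if $i>Q/2$ then every monomial of $u_i$ has $z$-degree at least $2i-1>Q-1$, so $c_i=0$; and if $i\not\equiv 1\pmod 3$ then the exponent-matching system for the remaining summand has no integral solution, so again $c_i=0$. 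This leaves $i\equiv 1\pmod 6$ with $1<i\le Q/2$, and for such $i$ the same computation as in \Cref{lem:v-u1} applied to $A_x^{2Q-1}A_y^{Q-i-1}z^{2i-1}$ yields
\[
  c_i=\binom{2Q-1}{\tfrac{5Q+2i-3}{3}}\binom{Q-i-1}{\tfrac{Q+i-3}{3}}.
\]
Since $2Q-1=2^{2\ell}-1$ is a string of ones, \Cref{lem:lucas} makes the first factor odd, so everything reduces to the claim that $\binom{Q-i-1}{(Q+i-3)/3}$ is even whenever $i\equiv 1\pmod 6$ and $1<i\le Q/2$.

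This is the step I expect to carry the weight. Since $0\le i<2^{2\ell-1}-1$, the top entry $Q-i-1=(2^{2\ell-1}-1)-i$ is the bitwise complement of $i$ within the $2\ell-1$ low bits, while $(Q+i-3)/3<Q/2=2^{2\ell-2}$ is supported on those same bits; so by \Cref{lem:lucas} the binomial coefficient is even precisely when $(Q+i-3)/3$ and $i$ have a common $1$ in their binary expansions. Writing $i=6c+1$ with $c\ge 1$, one has $(Q+i-3)/3=2(R+c)$, where $R=(2^{2\ell-2}-1)/3$ has its $1$-bits exactly at the odd positions $1,3,\dots,2\ell-3$; a direct carry computation then shows that $i$ and $(Q+i-3)/3$ agree in bit position $2$, both being equal to $\mathrm{bit}_0(c)\oplus\mathrm{bit}_1(c)$. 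Thus the claim holds at once unless $c\equiv 0$ or $3\pmod 4$ (equivalently $i\equiv 1$ or $19\pmod{24}$). I would settle these two residue classes by descent on $\ell$: when $4\mid c$, dividing $4$ out of both $R+c$ and $3c$ reduces the claim verbatim to the case $\ell-1$, while when $c\equiv 3\pmod 4$ the analogous maneuver reduces it to a small companion statement of the same shape, which is in turn handled by the same dichotomy and descent; the base cases $\ell\le 2$ are vacuous, since then there is no admissible $i$.

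The genuinely delicate part is this last digit-by-digit descent: tracking carries while forming $3c$ and $R+c$ in parallel, and verifying that the size restriction $c\le(Q/2-1)/6$ survives the reduction $\ell\mapsto\ell-1$ (without it the bare inequality $(R+c)\wedge 3c\neq 0$ can fail). The remaining ingredients — the passage to a single coefficient, the vanishing of the first summand, and the extraction of the binomial product — are routine and mirror \Cref{lem:v-u1} and the computation behind \Cref{lem:relations-a-b}.
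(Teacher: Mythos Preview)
Your reduction to the parity of $\binom{Q-i-1}{(Q+i-3)/3}$ is correct and matches the paper exactly: the elimination of the $A_x^{Q-i-1}A_y^{2Q-1}$ summand, the congruence $i\equiv 1\pmod 6$, the oddness of $\binom{2Q-1}{a}$, and the reformulation via Lucas as ``$i$ and $(Q+i-3)/3$ share a one--bit'' are all what the paper does. The divergence is entirely in how the final parity claim is proved.

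The paper does \emph{not} descend on $\ell$. It fixes once and for all the single bit $k=\lfloor\log_2 i\rfloor$ (the top bit of $i$, automatically a zero--bit of $Q-i-1$) and shows directly that bit $k$ of $b=(Q+i-3)/3$ is $1$. This is done in two short steps: first, adding $i$ to $Q-3=\underbrace{1\cdots 1}_{2\ell-3}01$ and tracking carries shows that bit $k$ of $Q+i-3$ is $1$ precisely because $i\equiv 1\pmod 6$ with $i>1$ forces some $b_r=1$ for $1\le r<k$ (this rules out $i=2^k+1$); second, a general lemma on dividing by $3$ says that if $s$ has a one at bit $k$ followed only by zeros up to its leading bit, then $s/3$ also has a one at bit $k$. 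No recursion, no companion statement, no size--constraint bookkeeping.

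Your descent, by contrast, is not yet a proof. Two concrete issues: (a) you misplace the support of $R=(2^{2\ell-2}-1)/3$: its one--bits sit at the \emph{even} positions $0,2,\dots,2\ell-4$ (it is $2R$ that lives on the odd positions). This is harmless for your bit--$2$ computation, which is correct, but it matters once you try to iterate. (b) The $c\equiv 3\pmod 4$ branch does not fold back into the original statement. Writing $c=4c'+3$ gives $3c=1+4(3c'+2)$ and $R+c=4(R'+c'+1)$, so after shifting you need $(3c'+2)\wedge(R'+c'+1)\neq 0$, which is a genuinely different functional form from $3c'\wedge(R'+c')\neq 0$. Calling this ``the same shape'' and invoking ``the same dichotomy and descent'' hides that you now have to run a \emph{mutual} induction between two statements, each with its own bit--level case split and its own size restriction to preserve; you have not set that up. Your own remark that the bare inequality $(R+c)\wedge 3c\neq 0$ can fail without the bound (indeed it fails already at $\ell=3$, $c=11$) shows this bookkeeping is essential, and it is not carried out.

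In short: the scaffolding is right, but the load--bearing combinatorial step is only sketched and leads into a two--branch recursion you haven't closed. The paper sidesteps all of this by looking at a single, well--chosen bit.
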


\begin{proof}
As in the proof of \Cref{lem:v-u1}, $v \cdot u_i = c (xyz)^{4Q-1}$, where $c$ is the coefficient of $x^{4Q-2} y^{Q-2} z^{Q-1}$ in $u_i$. As before, we may focus only on the $A_x^{2Q-1} A_y^{Q-i-1} z^{2i-1}$ term and, expanding, we obtain
\begin{multline*}
    (x^2 + yz)^{2Q-1} (y^2 + xz)^{Q-i-1} z^{2i-1} \\
    =\sum_{a=0}^{2Q-1} \sum_{b=0}^{Q-i-1} \binom{2Q-1}{a} \binom{Q-i-1}{b}
    x^{Q-i-1+2a-b} y^{2Q-1-a+2b} z^{3Q+i-3-a-b}.
\end{multline*} This forces
\begin{equation*}
    a = \frac{5Q + 2i - 3}{3},
    \quad
    b = \frac{Q + i - 3}{3}.
\end{equation*}
Observe that, because $Q = 2^{2\ell - 1} \equiv 2 \pmod{3}$, the system admits integral solutions only when $i \equiv 1 \pmod{3}$. Thus, because $i$ is odd we have that $i \equiv 1 \pmod{6}$

Now, in order to show that $v \cdot u_i = 0$, we must show that one of the binomial coefficients $\binom{2Q-1}{a}$ and $\binom{Q-i-1}{b}$ is even. As observed in the proof of \Cref{lem:v-u1}, the first of these is always odd; therefore, we must show that $\binom{Q-i-1}{b}$ is even.

Since $i \equiv 1 \pmod{6}$ it is either 1 or $7 \pmod{12}$. We have $Q - 1 = \sum_{r=0}^na_r2^r$ where $a_r=1$ for all $r$. Let $k\in \NN$ be the largest power of 2 in the binary expansion of $i$. Then in the binary expansion of $Q-1-i$ we have $a'_k=0$.

Now we consider the binary expansion of $b$. We want to show that $c_k=1$, which by \Cref{lem:lucas} implies that the binomial coefficient is even. We now introduce two claims to show that this binomial coefficient is indeed even.\\

\begin{claim}
\label{claim:sum-binary}
Suppose that $i = 1 \pmod{6}$, and denote its binary expansion by $\sum_{r=0}^k b_r 2^r$, with $b_r \in \{ 0, 1 \}$ and $b_1 = b_k = 1$. Then denoting the binary expansion of $Q - 3 + i$ by $\sum_{r \geq 0} c_r 2^r$ with $c_r \in \{ 0, 1 \}$, we have $c_k = 1$.
\end{claim}

\begin{subproof}[Proof of \Cref{claim:sum-binary}]
Because $Q$ is a power of 2, denoting the binary expansion of $Q - 3$ by $\sum_{r=0}^n a_r 2^r$ where $a_r \in \{ 0, 1 \}$, we have $a_1 = 0$ and $a_r = 1$ for all $r \neq 1$. Observe that $c_k = 0$ only if $b_r=0$ for all $1<r<k$. But since $i \equiv 1$ or $7 \pmod{12}$ we necessarily have $b_r=1$ for some $1<r<k$.
\end{subproof}

\begin{claim}
\label{claim:quotient-binary}
Suppose $s$ is divisible by 3, and denote its binary expansion by $\sum_{r=0}^n a_r 2^r$, with $a_r \in \{ 0, 1 \}$. Suppose that $a_n = 1$, and that for some $k$, $a_k = 1$ and $a_r = 0$ for all $k < r < n$. Then, denoting the binary expansion of $s / 3$ by $\sum_{r=0}^n b_r 2^r$ we have $b_k = 1$.
\end{claim}

\begin{subproof}[Proof of \Cref{claim:quotient-binary}]
Let $t = \sum_{r=0}^{n-k} a_{r+k} 2^r$. We consider two cases. First, suppose that $n - k$ is odd. In this case, because $t$ is one more than an odd power of 2, $t$ is divisible by 3. Because $t$ is odd, $t / 3$ is also odd, so denoting the binary expansion of $t / 3$ by $\sum_{r \geq 0} c_r 2^r$ with $c_r \in \{ 0, 1 \}$, we have that $c_0 = 1$. Observe that $s$ can be written in the form $s = t \cdot 2^k + s'$, where $s' < 2^k$. Therefore, we have
\begin{equation*}
    \frac{s}{3}
    = \frac{t}{3} \cdot 2^k + \frac{s'}{3}
    = \sum_{r \geq k} c_{r-k} 2^r + \frac{s'}{3},
\end{equation*}
with $s' / 3 < 2^k$. We conclude that $b_k = c_0 = 1$.

Next consider the case in which $n - k$ is even. In this case, because $t$ is one more than an even power of 2, we have $t \equiv 2 \pmod{3}$. Also, because $t$ is odd, $(t - 2) / 3$ is also odd, so expressing $t / 3$ in the form $\sum_{r \geq 0} c_r 2^r + 2 / 3$ with $c_r \in \{ 0, 1 \}$, we have $c_0 = 1$. Observe that $s$ can be written in the form $s = t \cdot 2^k + s'$, where $s' < 2^k$. Therefore, we have
\begin{equation*}
    \frac{s}{3}
    = \frac{t}{3} \cdot 2^k + \frac{s'}{3}
    = \sum_{r \geq k} c_{r-k} 2^r + \frac{2}{3} \cdot 2^k + \frac{s'}{3}.
\end{equation*}
Since $s' < 2^k$, we have $(2 / 3) 2^k + (s' / 3) < 2^k$, and hence we conclude that $b_k = c_0 = 1$.
\end{subproof}

By \Cref{claim:sum-binary}, we have that the $k\nth$ bit in $Q - 3 + i$ is 1. We have $Q - 3 + i = \sum_{r=0}^n a_r 2^r$, where $a_n = 1$, $a_k = 1$, and $a_r = 0$ for $k < r < n$. Now consider $b = \sum_{r=0}^n b_r 2^r$. Then by \Cref{claim:quotient-binary}, we have $b_k = 1$, as desired. Thus $\binom{Q-i-1}{b}$ is always even and we are done.
\end{proof}

\subsection{Injectivity of the \texorpdfstring{$\psi$}{psi} map}
The ideas of this section are based on those appearing in \cite[Section 2]{Mon98a} modified to account for the additional basis element.

\begin{definition}
Let $U$ and $U'$ be vector spaces over $\overline{\FF_2}$ with bases $\{u_i\}_{i\in I}, \{v_i, w\}_{i\in I}$ respectively where $I=\{i\st i \textrm{  is odd and  } 1\leq i\leq Q-1\}$. We define the linear map $T: U\to U'$ with $T(u_i)=\alpha^{Q-i} v_i + t \smashoperator{\sum_{i+j =Q/4^s}} v_j + \delta_{i,1} w$. Let $N_\alpha^+(Q,t)$ denote the nullity of the $T$ map.
\end{definition}

Suppose $v_1,\ldots v_{Q-1}$ are elements of a vector space over $\overline{\FF_2}$ and for $1\leq j \leq Q-1$ we have $F_j=\smashoperator{\sum_{i+(Q-j) =Q/4^s}} v_i$, $s\in\ZZ,s\geq 0$. Then by Lemma 2.5 of \cite{Mon98a} the same formula holds if we interchange the $v_i$ and the $F_i$. In particular, $v_{Q-i}=\smashoperator{\sum_{i + j = Q / 4^s}} F_j$.

\begin{lemma}
\label{lem:same-nullity}
The following linear maps $U \to U'$ have identical nullity:
\begin{gather*}
    a :u_i \mapsto tv_i + \alpha^i \smashoperator{\sum_{i+j=Q/4^s}} v_j + \delta_{i,1} w, \\
    b : u_i \mapsto \alpha^{Q-i} v_i + t \smashoperator{\sum_{i+j =Q/4^s}} v_j + \delta_{i,1} w.
\end{gather*}
\end{lemma}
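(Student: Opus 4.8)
The plan is to exhibit an explicit isomorphism $\Theta$ of the target space $U'$ that carries the image of the map $a$ onto the image of the map $b$, so that the two maps have the same rank and hence the same nullity on $U$. The natural candidate, following the substitution device of \cite[Lemma~2.5]{Mon98a}, is to send $v_i \mapsto F_i$ and $w \mapsto w$, where $F_j = \sum_{i + (Q-j) = Q/4^s} v_i$ as in the discussion preceding the statement. First I would record that $\{F_i\}_{i \in I}$ is again a basis of the span of the $v_i$: by the cited self-duality, the relation $v_{Q-i} = \sum_{i+j = Q/4^s} F_j$ lets one recover each $v_i$ from the $F_j$, so the linear map $v_i \mapsto F_i$ is invertible, and extending by $w \mapsto w$ gives an automorphism $\Theta$ of $U'$.

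Next I would compute $\Theta(a(u_i))$. Since $t v_i \mapsto t F_i$ and $\alpha^i \sum_{i+j = Q/4^s} v_j \mapsto \alpha^i \sum_{i+j=Q/4^s} F_j = \alpha^i v_{Q-i}$ by the displayed identity, we get
\[
\Theta(a(u_i)) = t F_i + \alpha^i v_{Q-i} + \delta_{i,1} w.
\]
Comparing this with $b(u_{Q-i}) = \alpha^{i} v_{Q-i} + t\sum_{(Q-i)+j = Q/4^s} v_j + \delta_{Q-i,1} w$ and using once more that $\sum_{(Q-i)+j=Q/4^s} v_j = F_i$ (this is exactly the defining relation for $F_i$, since $i + (Q-i) = Q = Q/4^0$ and more generally $i$ ranges over the relevant indices), one sees that $\Theta \circ a$ agrees with $b$ after the index substitution $i \leftrightarrow Q-i$ on the source, up to the bookkeeping of the $w$-term. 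Thus $\Theta$ conjugates $a$ into $b$ precomposed with the coordinate permutation $u_i \mapsto u_{Q-i}$ of $U$, and since permuting basis vectors of the source and applying an automorphism of the target both preserve nullity, $a$ and $b$ have the same nullity.

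The one genuinely delicate point — and the step I expect to be the main obstacle — is the treatment of the extra basis element $w$ and the Kronecker delta: the term $\delta_{i,1} w$ sits on $u_1$ for $a$ but on $u_{Q-1}$ for $b$ after the substitution, so the naive conjugation does not literally intertwine the two maps on the nose. I would resolve this by observing that $w$ spans a direct summand of $U'$ that is hit (by either map) only through the single source vector carrying the delta, so the rank contributed by the $w$-coordinate is $1$ for both maps independently of which source vector it is attached to; equivalently, one can first quotient $U'$ by $\langle w \rangle$ and $U$ by the corresponding line, check equality of nullities there via the clean argument above, and then note that restoring $w$ adds exactly one to the rank of each map. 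Care is also needed to confirm that the index set $I = \{i : i \text{ odd}, 1 \le i \le Q-1\}$ is symmetric under $i \mapsto Q-i$; since $Q$ is a power of $2$ this requires $Q-i$ to again be odd, which holds, and $1 \le Q-i \le Q-1$, so the permutation is well defined. Assembling these observations yields the claimed equality of nullities.
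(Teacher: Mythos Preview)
Your conjugation via $\Theta$ (sending $v_i \mapsto F_i$, $w \mapsto w$) together with the source permutation $P\colon u_i \mapsto u_{Q-i}$ is correct and is exactly the device the paper uses. Your computation that $\Theta(a(u_i)) = tF_i + \alpha^i v_{Q-i} + \delta_{i,1}w$ and $b(u_{Q-i}) = \alpha^i v_{Q-i} + tF_i + \delta_{Q-i,1}w$ is right, and you correctly isolate the only discrepancy as the location of the $w$-term.

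The gap is in how you dispose of that discrepancy. Writing $H_1 = \ker\delta_1^* = \Span\{u_i : i \neq 1\}$, one has $\ker a = \ker c \cap H_1$ and $\ker b = \ker d \cap H_1$, where $c,d$ are the maps with the $w$-term deleted. Your conjugation gives $\ker d = P(\ker c)$, but $P(H_1) = H_{Q-1} \neq H_1$, so from this alone one only obtains $\dim(\ker d \cap H_1) = \dim(\ker c \cap H_{Q-1})$, not $\dim(\ker c \cap H_1)$. Your assertion that ``restoring $w$ adds exactly one to the rank of each map'' is equivalent to $\ker c \not\subseteq H_1$ and $\ker d \not\subseteq H_1$, which is unjustified (and false, for instance, whenever $c$ is injective). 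A two-dimensional toy example shows the danger: if $c(u_1)=0$ and $c(u_2)=v$, then attaching $w$ to $u_1$ gives nullity $0$ while attaching it to $u_2$ gives nullity $1$.

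The paper supplies the missing ingredient. After using $w$ to force the $u_1$-coefficient of any element of $\ker a$ to vanish, it observes that among the remaining $a(E_i)=\alpha^{Q-i}F_i + t\sum F_j$ the basis vector $F_{Q-1}$ occurs only in $a(E_1)$ (since $i+(Q-1)=Q/4^s$ forces $i=1$). Hence the $E_1=u_{Q-1}$ coefficient must also vanish, so $\ker a \subseteq H_1\cap H_{Q-1}$; the symmetric argument with $v_{Q-1}$ gives $\ker b \subseteq H_1\cap H_{Q-1}$. Since $H_1\cap H_{Q-1}$ is $P$-stable, your conjugation now applies cleanly on this subspace and yields $\text{nullity}(a)=\text{nullity}(b)$. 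In short, your outline needs exactly this extra ``$F_{Q-1}$'' step to close.
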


\begin{proof}
Let $E_i = u_{Q-i}$. Then  $\displaystyle a : E_i \mapsto \alpha^{Q-i} F_i +  t \smashoperator{\sum_{i+j =Q/4^s}} F_j + \delta_{Q-i,1} w$. Notice that in the $E_i$ basis $\delta_{Q-i,1} w = 0$ for all $i\neq Q-1$. Hence $\ker a\subseteq\Span \{ E_i \st 1 \leq i < Q - 1 \}$. Now since $i\neq Q-1$ in the kernel, we know that the $\alpha^{Q-i}F_i$ term in $a(E_i)$ cannot contribute an $F_{Q-1}$ term. We only get an $F_{Q-1}$ from $ \smashoperator{\sum_{i+j =Q/4^s}} F_j$ when $i=1$ so $\ker a\subseteq\Span \{ E_i \st 1 < i < Q - 1 \}$. The same argument can be repeated for the $b$ map, so that $\ker b\subseteq\Span \{ u_i \st 1 < i < Q - 1 \}$. Then by \cite[Lemma 2.6]{Mon98a} the maps
\begin{equation*}
    c: u_i \mapsto tv_i + \alpha^i \smashoperator{\sum_{i+j=Q/4^s}} v_j \hspace{20pt} \textrm{and} \hspace{20pt} d: u_i \mapsto \alpha^{Q-i} v_i + t \smashoperator{\sum_{i+j =Q/4^s}} v_j
\end{equation*}
$c$ and $d$ have the same nullity. By the above discussion, $a$ and $b$ have the same nullity too.

\end{proof}

\begin{lemma}\label{nullities are the same}
Given $Q \geq 4$ and $t \neq 0$, let $t^* = t + \alpha^Q t^{-1} \neq 0$. Then $N_\alpha^+(Q, t) = N_\alpha^+(Q / 4, t^*)$. In other words, applying $\Phi_\alpha$ does not change the nullity of $T$.
\end{lemma}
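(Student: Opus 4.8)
The plan is to translate the statement into a comparison of two linear maps on the same pair of vector spaces, and then to exploit the two-parameter bookkeeping built into the sums $\sum_{i+j=Q/4^s} v_j$. By definition, $T$ for the pair $(Q,t)$ sends $u_i \mapsto \alpha^{Q-i} v_i + t \sum_{i+j=Q/4^s} v_j + \delta_{i,1} w$, and $N_\alpha^+(Q,t)$ is its nullity. First I would write out the corresponding map $T'$ for the pair $(Q/4, t^*)$ on bases indexed by odd $1 \le i \le Q/4 - 1$; note that this is a map between \emph{smaller} spaces, so the first task is to account for the dimension drop. The mechanism for this is exactly the $\Phi_\alpha$-step: the relation $t^* = t + \alpha^Q t^{-1}$ is the image of $(1,t)$ (or the appropriate power-of-two twist) under $(Q,t)\mapsto(Q/4, t+\alpha^Q t^{-1})$, and the constraint $i+j = Q/4^s$ with $Q$ replaced by $Q/4$ simply shifts the allowed exponents $s$ by one.

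The key steps, in order: (i) rewrite $T$ using \Cref{lem:same-nullity} to replace the map $b: u_i \mapsto \alpha^{Q-i}v_i + t\sum v_j + \delta_{i,1}w$ by the map $a: u_i \mapsto t v_i + \alpha^i \sum_{i+j=Q/4^s} v_j + \delta_{i,1} w$, which has the same nullity; this puts the scalar $t$ on the diagonal and isolates the combinatorial sum. (ii) In the $a$-picture, use the substitution $v_{Q-i} = \sum_{i+j=Q/4^s} F_j$ (from Lemma~2.5 of \cite{Mon98a}, quoted above) to re-express the images in the $F$-basis; this is the move that effectively ``folds'' the index set in half, dropping the dimension by the right amount and producing $\alpha^Q t^{-1}$ from the interaction of the diagonal $t$-term and the shifted sum. (iii) Track the special basis vector $w$ through this change of basis — by the computation in the proof of \Cref{lem:same-nullity}, $w$ only appears in the image of a single basis element and the kernel is forced into the span of the remaining $u_i$, so the extra coordinate $w$ contributes nothing to the nullity and the argument reduces to the $w$-free maps $c,d$, for which \cite[Lemma 2.6]{Mon98a} applies. (iv) Combine: the composite identifications show the $T$-map for $(Q,t)$ and the $T$-map for $(Q/4, t^*)$ have equal nullity, which is the claim.

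The main obstacle I expect is step (ii)/(iii): verifying carefully that the change of basis $v_i \leftrightarrow F_i$ together with the diagonal $t$ and the shifted summation condition $i+j = Q/4^s$ really does produce precisely the map $T$ for the parameters $(Q/4, t+\alpha^Q t^{-1})$ — including getting the exponent of $\alpha$ right (it should land on $\alpha^Q$ after the substitution, matching $\alpha^{(Q/4)\cdot 4} = \alpha^Q$ in the definition of $\Phi_\alpha$ on the two-parameter system $X = 2^{\ZZ}\times(F\cup\{\infty\})$) and confirming that the $\delta_{i,1}w$ term survives the reindexing attached to the correct basis vector. The splitting-off of $w$ is the genuinely new ingredient compared to \cite{Mon98a}, so I would isolate it as the delicate point and handle it by the kernel-localization argument already used in \Cref{lem:same-nullity} rather than by a direct matrix computation.
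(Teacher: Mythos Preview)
Your scaffolding is right --- the proof does hinge on \cite[Lemma~2.5]{Mon98a} and on \Cref{lem:same-nullity} --- but step~(ii) misidentifies the mechanism that produces $t^* = t + \alpha^Q t^{-1}$. Re-expressing the targets via $v_i \leftrightarrow F_i$ is only a change of basis in the \emph{codomain}, hence an isomorphism that cannot reduce the dimension of the domain or manufacture a new scalar on the diagonal. In the paper the decisive move is a change of basis in the \emph{domain}: one sets $G_i = \alpha^i t^{-1} u_i + u_{Q-i}$ for $i < Q/2$, and a direct computation of $T(G_i) = \alpha^i t^{-1} T(u_i) + T(u_{Q-i})$ yields
\[
T(G_i) = t^* F_i + \alpha^i \smashoperator{\sum_{\substack{i+j=Q/4^s\\ s>0}}} F_j + \alpha t^{-1}\,\delta_{i,1}\, w.
\]
It is this source-side combination, not the target relabeling, that folds the index set in half and makes $t^*$ appear; the $F_i$ are only bookkeeping (indeed $F_i = v_i$ for $i < Q/2$). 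Your proposal never introduces $G_i$ or anything playing its role, so as written step~(ii) would stall: the ``interaction of the diagonal $t$-term and the shifted sum'' you allude to simply does not occur without first combining $u_i$ with $u_{Q-i}$.

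A secondary divergence: the paper applies \Cref{lem:same-nullity} at the \emph{end}, not the beginning. After the $G_i$ computation one checks that $\{G_i\}_{i<Q/2} \cup \{u_i\}_{Q/2<i<Q}$ is a basis of $U$ (this needs $\alpha \neq 0$), that $\ker T \subseteq \Span\{G_i : 1 \le i < Q/4\}$ since $t^* \neq 0$, and that on this span $T$ is exactly the map $a$ of \Cref{lem:same-nullity} for parameters $(Q/4, t^*)$, up to the harmless nonzero scalar $\alpha t^{-1}$ on $w$. Then \Cref{lem:same-nullity} identifies its nullity with $N_\alpha^+(Q/4, t^*)$. Applying \Cref{lem:same-nullity} first, as you propose, is not wrong in principle, but you would still need a $G_i$-type source combination afterward to collapse the $a$-map for $(Q,t)$ down to $(Q/4, t^*)$; the Monsky $F$-involution by itself will not do it. Finally, the paper disposes of $\alpha = 0$ separately (there $t^* = t$ and both nullities vanish), a case your outline does not address.
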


\begin{proof}
For each $i < Q / 2$ we define $G_i = \alpha^i t^{-1} u_i + u_{Q-i}$. Also, with $F_i$ defined as in \cite[Lemma 2.5]{Mon98a}, for $i < Q / 2$ we have $F_i = v_i$. Observe that, for $i < Q / 2$, we have
\begin{equation*}
    T(u_i) = \alpha^{Q-i} F_i + t v_{Q-i} + t \smashoperator{\sum_{\substack{i + j = Q / 4^s \\ s > 0}}} F_j + \delta_{i,1} w,
\end{equation*}
and similarly, $T(u_{Q-i}) = \alpha^i v_{Q-i} + t F_i$. It follows that
\begin{equation*}
T(G_i) = t^{-1}\alpha^iT( u_i) + T(u_{Q-i}) = t^* F_i + \alpha^i \sum F_j + \alpha t^{-1} \delta_{i,1} w,
\end{equation*} with the sum exactly as before. We then obtain the following formulae:
\begin{align}
\label{eq:t-e1}
T(G_1) &= t^* F_i + \alpha^i \smashoperator{\sum_{\substack{1+j=Q/4^s \\ s>0}}} F_j + \alpha t^{-1} w, & i &= 1;\\
\label{eq:t-ei}
T(G_i) &= t^* F_i + \alpha^i \smashoperator{\sum_{\substack{i+j=Q/4^s \\ s>0}}} F_j, & 1 < &i < Q / 4;& \\
T(G_i) &= t^* F_i, & Q / 4 \leq &i < Q / 2;& \\
T(u_i) &= \alpha^{Q-i} v_i + t F_{Q-i}, & Q / 2 < &i < Q.&
\end{align}
We may now show that $N_\alpha^+(Q, t) = N_\alpha^+(Q / 4, t^*)$. First we will assume that $\alpha = 0$. Because $t^* = t + \alpha^Q t^{-1}$, $t^* = t$, and it is immediate that $N_\alpha^+(Q, t) = N_\alpha^+(Q/4, t) = 0$. Now consider the case of $\alpha \neq 0$. In this case, the set $\{ G_i \st 1 \leq i < Q / 2 \} \cup \{ u_i \st Q / 2 < i \leq Q - 1 \}$ forms a basis of $U$. Since $t^* \neq 0$, $\ker T\subseteq\Span \{ G_i \st 1 \leq i < Q / 4 \}$. \Cref{eq:t-e1,eq:t-ei} together with \Cref{lem:same-nullity} show that $N_\alpha^+(Q, t) = N_\alpha^+(Q/4, t^*)$.
\end{proof}

\begin{lemma}
Given $t\neq0$, $\ell=\ell_\alpha(Q,t)<\infty$, $Q=4^\ell/2$, then $N_\alpha^+(Q,t)=0$. 
\end{lemma}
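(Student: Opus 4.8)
The plan is to reduce the finite-escape-time case to a base case by induction on $\ell$, using the nullity-invariance established in the preceding lemma (\Cref{nullities are the same}). Given $t \neq 0$ with $\ell = \ell_\alpha(Q,t) < \infty$ and $Q = 4^\ell/2$, set $t^* = t + \alpha^Q t^{-1}$. The escape-time hypothesis means exactly that iterating $\Phi_\alpha$ starting from $(Q,t)$ reaches the second coordinate $0$ after $\ell$ steps; in particular $t^* \neq 0$ precisely because $\ell \geq 1$ and the escape happens at step $\ell$, not earlier. By \Cref{nullities are the same} we have $N_\alpha^+(Q,t) = N_\alpha^+(Q/4, t^*)$, and the escape time of $(Q/4, t^*)$ under $\Phi_\alpha$ is $\ell - 1$. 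Iterating this $\ell$ times brings us to the pair $(Q/4^\ell, 0) = (1/2, 0)$, at which point the second coordinate vanishes.

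So the first step is to verify that $t^* \neq 0$ at each stage of the iteration, which follows from the definition of escape time: if $t^*$ were zero at an intermediate step $r < \ell$, then $\ell_\alpha(Q,t)$ would be $r$, not $\ell$. The second step is the base case: I need to compute $N_\alpha^+(Q', 0)$ where $Q' = 1/2$, i.e. understand the $T$ map when the second parameter is $0$. Setting $t = 0$ in the formula $T(u_i) = \alpha^{Q-i} v_i + t\sum v_j + \delta_{i,1} w$, the map becomes $u_i \mapsto \alpha^{Q-i} v_i + \delta_{i,1} w$. When $\alpha \neq 0$ this is visibly injective — each $u_i$ maps to a nonzero multiple of $v_i$ plus possibly $w$, and these images are linearly independent since the $v_i$ are — so the nullity is $0$. (One subtlety: the index set $I = \{i \text{ odd},\ 1 \le i \le Q-1\}$ for $Q' = 1/2$ is empty, so $U = 0$ trivially and the nullity is vacuously $0$; either way the conclusion holds.)

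The main obstacle is bookkeeping rather than conceptual: I must be careful that the hypotheses of \Cref{nullities are the same} ($Q \geq 4$, $t \neq 0$) remain satisfied at every step of the induction except possibly the last, and that the $\alpha = 0$ case is handled separately. When $\alpha = 0$ the map $\upvarphi_\alpha$ sends $t \mapsto t^4$, which is never $0$ for $t \neq 0$, so $\ell_\alpha = \infty$ unless we start at $t$ with $t = 0$; thus the finite-escape-time hypothesis with $\alpha = 0$ forces a degenerate situation already covered. For $\alpha \neq 0$, the chain of equalities
\[
  N_\alpha^+(Q, t) = N_\alpha^+(Q/4, t^{*}) = \cdots = N_\alpha^+(Q/4^\ell, 0) = 0
\]
is legitimate as long as each intermediate first coordinate $Q/4^r$ is $\geq 4$ — equivalently $r \leq \ell - 1$ — which is exactly the range in which \Cref{nullities are the same} applies, and the final equality is the base case above. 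This completes the argument.
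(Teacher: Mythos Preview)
Your approach is the same as the paper's---descend via the nullity-invariance lemma and finish with a direct base case---but there is an off-by-one error in your stopping analysis that leaves a genuine gap. You assert that $Q/4^r \geq 4$ is equivalent to $r \leq \ell - 1$, but since $Q = 4^\ell/2$ we have $Q/4^r = 2^{2(\ell-r)-1}$, and this is $\geq 4$ iff $2(\ell-r)-1 \geq 2$, i.e.\ $r \leq \ell - 2$. Thus the nullity-invariance lemma can be applied only $\ell - 1$ times, carrying you from $(Q,t)$ down to $(Q/4^{\ell-1},t_{\ell-1}) = (2,t_{\ell-1})$ with $t_{\ell-1} \neq 0$. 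The final step you propose, from $(2,t_{\ell-1})$ to $(1/2,0)$, is blocked by \emph{both} hypotheses of \Cref{nullities are the same}: $2 < 4$, and moreover $t^* = t_\ell = 0$ by definition of the escape time. So your chain never reaches the pair $(1/2,0)$ and your base-case computation (either the $t=0$ formula or the empty-index-set observation at $Q'=1/2$) does not apply.

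The fix is exactly what the paper does: take the base case at $Q = 2$. There the index set is $\{1\}$, and $T(u_1) = \alpha\, v_1 + t_{\ell-1}\,v_1 + w$ has a nonzero $w$-component, so $T$ is injective and $N_\alpha^+(2,t_{\ell-1}) = 0$. With that correction your argument coincides with the paper's proof.
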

\begin{proof}
We will show this by induction. When $\ell = 1, Q = 2$ so $\alpha=1$. Thus we can check that the $2\times 1$ matrix has nullity 0. Suppose now that $\ell > 1$ with $Q \geq 4$ and $t^*$ non-zero. We may define an $\ell^*$ as $\ell_{\alpha}(Q/4,t^*)$. Suppose for induction that $Q=4^\ell/2$. Then because $\ell^*=\ell-1$, $Q/4=4^{\ell^*}/2$. Repeating this process we obtain that $N_{\alpha}^+(Q/4,t^*)=0$. Since $N_{\alpha}^+(Q/4,t^*)=0$, by \Cref{nullities are the same} $N_\alpha^+(Q,t)=0$ as desired.
\end{proof}

\begin{corollary}\label{cor:psi-inj}
The map $\psi: W/W_0 \to W'/hW_0 \oplus \hO_{12Q-3}$ is injective. 
\end{corollary}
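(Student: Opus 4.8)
The plan is to recognize $\psi$ as a special instance of the map $T$ and then invoke the vanishing of $N_\alpha^+$ established just above. The first step is to fix compatible bases on source and target. By \cite[Theorem 4.6]{Mon98a} the classes of the elements $u_i=[Q-i-1,2Q-1]z^{2i-1}$, with $i$ odd and $1\le i\le Q-1$, form a basis of $W/W_0$; and the corresponding analysis of the line--$S_4$ quartic in \cite{Mon98a} (the target analogue of \cite[Theorem 4.6]{Mon98a}, together with \cite[Theorem 4.4]{Mon98a}) identifies a basis $\{v_i\}$ of $W'/h_\alpha W_0$ indexed by the same set. Finally, $\hO_{12Q-3}$ is one--dimensional by pigeonhole, spanned by $w:=(xyz)^{4Q-1}$. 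Under these identifications the target of $\psi$ is exactly the space $U'$ with basis $\{v_i,w\}$ appearing in the definition of $T$.

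Next I would compute $\psi(u_i)=(h_\alpha u_i,\,vu_i)$ in these bases. The $\hO_{12Q-3}$--component is controlled entirely by \Cref{lem:v-u1} and \Cref{lem:v-ui}: it equals $\delta_{i,1}w$. For the $W'/h_\alpha W_0$--component, i.e. the class of $h_\alpha u_i$, I would transport Monsky's computation of multiplication by $h_\alpha$: writing $h_\alpha=\alpha z^4+A_xA_y$ one has $h_\alpha\cdot[i,j]z^k=\alpha[i,j]z^{k+4}+[i+1,j+1]z^k$, and reducing modulo $W_0$ and $h_\alpha W_0$ and re--expressing in the $u_i$/$v_i$ bases --- precisely the content of \cite[Section 2]{Mon98a} --- this component becomes $\alpha^{Q-i}v_i+t\sum_{i+j=Q/4^s}v_j$ with $t=1$. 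Hence, after the identifications above, $\psi$ coincides with the map $T$ evaluated at $(Q,t)=(Q,1)$ where $Q=4^{\ell_\alpha}/2$.

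Finally, since $\alpha$ has finite one--parameter escape time $\ell_\alpha$, a short direct check (matching the recursions of \Cref{def:one-parameter-dynamical} and \Cref{def:two-parameter-dynamical}, cf. \cite[Definition 2.2]{Mon98a}) shows $\ell_\alpha(Q,1)=\ell_\alpha<\infty$, so that $Q=4^{\ell_\alpha(Q,1)}/2$. The preceding lemma then yields $N_\alpha^+(Q,1)=0$, i.e. $T$ has trivial kernel, and therefore $\psi$ is injective. The one genuinely delicate point is the middle step: faithfully translating Monsky's matrix of multiplication by $h_\alpha$ into the present notation and verifying that, once the extra summand $\hO_{12Q-3}$ is absorbed via \Cref{lem:v-u1,lem:v-ui}, the resulting matrix is exactly that of $T$ --- in particular that the new basis vector $w$ occurs only in the image of $u_1$. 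Everything downstream of that identification is a one--line appeal to the nullity lemma.
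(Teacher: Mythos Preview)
Your proposal is correct and follows exactly the paper's approach: identify $\psi$ as the map $T$ for the specific parameters $(Q,t)=(4^{\ell_\alpha}/2,1)$ via the $u_i$ basis of \cite[Theorem~4.6]{Mon98a} together with \Cref{lem:v-u1,lem:v-ui}, and then invoke the preceding nullity lemma. The paper's own proof is a single sentence to this effect; you have simply unpacked the identification more explicitly and rightly flagged that the nontrivial content lies in matching Monsky's computation of multiplication by $h_\alpha$ to the definition of $T$.
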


\begin{proof}
The $\psi$ map is a particular case of $T$ where we specify the basis and fix $\alpha$ and hence $\ell$.
\end{proof}

\begin{corollary}\label{cor:noncontainment}
Let $v=(xy)f^Q$. Then $v\not\in h_\alpha\hO$. 
\end{corollary}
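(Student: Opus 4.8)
The plan is to deduce the corollary directly from the reduction carried out at the start of \Cref{sec:prelims} together with the injectivity of $\psi$ just obtained in \Cref{cor:psi-inj}; no new estimate is required, only the homological bookkeeping that links the two.

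First I would record the duality input. The ring $\hO=F[x,y,z]/(x^{4Q},y^{4Q},z^{4Q})$ is a graded complete intersection, hence Artinian Gorenstein with one--dimensional socle concentrated in degree $12Q-3$, so every multiplication pairing $\hO_d\times\hO_{12Q-3-d}\to\hO_{12Q-3}\cong F$ is perfect. Since $v=(xy)f^Q$ is homogeneous of degree $6Q+2$ and $h_\alpha$ of degree $4$, graded--ness gives $v\in h_\alpha\hO$ if and only if $v$ lies in the image of multiplication by $h_\alpha$ from $\hO_{6Q-2}$ to $\hO_{6Q+2}$. Because multiplication by $h_\alpha$ is self--adjoint for the Gorenstein pairing (both $\langle h_\alpha a,b\rangle$ and $\langle a,h_\alpha b\rangle$ compute the socle coefficient of $h_\alpha ab$), that image is precisely the orthogonal complement in $\hO_{6Q+2}$, under the perfect pairing $\hO_{6Q+2}\times\hO_{6Q-5}\to\hO_{12Q-3}$, of the kernel of multiplication by $h_\alpha$ from $\hO_{6Q-5}$ to $\hO_{6Q-1}$. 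Hence $v\notin h_\alpha\hO$ is equivalent to the existence of some $u\in\hO_{6Q-5}$ with $h_\alpha u=0$ but $vu\neq 0$ in $\hO_{12Q-3}$. By \cite[Theorem 4.8]{Mon98a} there is a nonzero $u$ with $h_\alpha u=0$, so it suffices to prove that the map $\hO_{6Q-5}\to\hO_{6Q-1}\oplus\hO_{12Q-3}$, $u\mapsto(h_\alpha u,vu)$, is injective.

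Next I would invoke the reductions of \Cref{sec:prelims}. By \cite[Theorem 4.4]{Mon98a} we have $h_\alpha W\subseteq W'$, and Monsky's structural results on these subspaces (\cite[Theorems 4.6, 4.8]{Mon98a}) allow $\hO_{6Q-5}$ to be replaced by $W$ without discarding any element of this kernel, with multiplication by $h_\alpha$ still injective on $W_0$. Since $v\cdot W_0=0$ --- the upshot of \Cref{lem:relations-a-b} and the discussion around \eqref{eq:diagram} --- all three vertical maps in \eqref{eq:diagram} are defined and the squares commute, the right-hand one being $\psi$. The rows of \eqref{eq:diagram} are exact, so the four lemma promotes injectivity of multiplication by $h_\alpha$ on $W_0$ together with injectivity of $\psi$ (\Cref{cor:psi-inj}) to injectivity of the middle map $W\to W'\oplus\hO_{12Q-3}$; for $u\in W$ with $h_\alpha u=0$ this forces $vu\neq 0$ unless $u=0$. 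Feeding a nonzero element of the kernel of multiplication by $h_\alpha$ on $W$ back through the equivalence of the previous paragraph yields $v\notin h_\alpha\hO$.

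The only genuinely nontrivial ingredient, injectivity of $\psi$, is already in hand, so the point I would be most careful about is the grading: the Gorenstein pairing naturally relates $\hO_{6Q+2}$ to $\hO_{6Q-5}$ rather than to $\hO_{6Q-2}$, and it is the self--adjointness of multiplication by $h_\alpha$ that reconciles the two and moves the question into degree $6Q-5$, where $W$, $W_0$, $W'$ and $\psi$ live. I expect the main (mild) obstacle to be spelling out, with reference to \cite{Mon98a}, that passing from $\hO_{6Q-5}$ to $W$ and then to $W/W_0$ neither loses the kernel of multiplication by $h_\alpha$ nor absorbs it into $W_0$; everything else is the four lemma and the perfect pairing.
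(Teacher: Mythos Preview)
Your argument is correct and arrives at the same place as the paper, but the paper's proof is considerably more direct. Rather than invoking Gorenstein duality and the four lemma, the paper simply argues by contradiction: assume $v = h_\alpha g$ for some $g \in \hO$; by \cite[Theorem 4.8]{Mon98a} there is a nonzero $u$ (giving a nonzero class in $W/W_0$) with $h_\alpha u = 0$; then $(h_\alpha u, vu) = (0, g h_\alpha u) = (0,0)$, so $\psi(\bar u) = 0$, contradicting \Cref{cor:psi-inj}. That three-line argument already contains the only direction of your Gorenstein equivalence that is needed --- the implication ``$v = h_\alpha g \Rightarrow vu = 0$ whenever $h_\alpha u = 0$'' is immediate from associativity --- and it avoids the four lemma entirely by applying the injectivity of $\psi$ directly to the class $\bar u \in W/W_0$ rather than first lifting injectivity to all of $W$. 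Your route has the merit of making explicit why injectivity of the combined map is the relevant obstruction, and the delicacy you flag at the end (that the kernel element must land in $W$ and not be absorbed into $W_0$) is indeed the one genuine subtlety; the paper's argument handles this implicitly by taking $u$ from Monsky's theorem as a class in $W/W_0$ from the outset, so that neither injectivity of $h_\alpha$ on $W_0$ nor containment of the full kernel in $W$ is ever required.
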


\begin{proof}
Suppose that $v\in h_\alpha\hO$. Then $v=h\cdot g$ for some $g\in \hO$. By \cite[Theorem 4.8]{Mon98a} we know there exists $u\neq 0\in \hO$ such that $hu=0\in \hO$. Then $(hu,vu)=(hu,ghu)=(0,0)$. However we just showed in \Cref{cor:psi-inj} that $\psi$ is injective so this is a contradiction. Thus $v\not\in h_\alpha\hO$.
\end{proof}

\begin{definition}
We say that an element $r\in R$ is a \emph{test element for tight closure} if for all ideals $I$ and elements $f$ we have $f\in I^*$ if and only if $rf^{Q}\in I^{[Q]}$ for all $Q = p^e$ (see \cite[Chapter 2]{Hun96} for more details about test elements).
\end{definition}

\begin{theorem}\label{thm:noncontainment-tight}
There exist infinitely many $\alpha\in \overline{\FF_2}$ for which $f\not\in I^*$ in $R_\alpha$.
\end{theorem}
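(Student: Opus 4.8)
The plan is to assemble the pieces already established in the excerpt and combine them with the density of elements of finite escape time. First I would recall the reduction from the introduction: by \cite[Proposition 1.1]{BM10}, to prove $y^3 z^3 \notin S^{-1}(I^*)$ it suffices to establish \ref{intro-1}, namely that $f = y^3 z^3 \notin I_\alpha^*$ for infinitely many $\alpha \in F$; the present theorem is exactly statement \ref{intro-1}. So the goal is to show that the set of $\alpha \in \overline{\FF_2}$ with $f \notin I_\alpha^*$ is infinite, and the strategy is to show it contains every $\alpha$ of finite escape time $\ell_\alpha < \infty$, and then to argue that there are infinitely many such $\alpha$.

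The containment "finite escape time $\Rightarrow f \notin I_\alpha^*$" is essentially done. For such an $\alpha$, set $\ell = \ell_\alpha$ and $Q = 2^{2\ell - 1} = 4^\ell / 2$. By \cite[Theorem 1.6]{BM10} the degree-two element $xy$ is a test element in $R_\alpha = F[x,y,z]/(\alpha z^4 + (x^2+yz)(y^2+xz))$, so by the definition of test element it is enough to exhibit one $e$ with $xy\,f^{2^e} \notin I_\alpha^{[2^e]}$; taking $e = 2\ell - 1$ this is equivalent to $v = (xy) f^Q \notin h_\alpha \hO$ in $\hO = F[x,y,z]/(x^{4Q}, y^{4Q}, z^{4Q})$, which is precisely \Cref{cor:noncontainment}. (That corollary in turn rests on the well-definedness of $\psi$ from the tiling argument of \Cref{lem:tiling} and on the injectivity of $\psi$ from \Cref{cor:psi-inj}.) So I would simply cite \Cref{cor:noncontainment} to conclude $f \notin I_\alpha^*$ in $R_\alpha$ for every $\alpha$ of finite escape time.

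The remaining point, and the only real obstacle, is that there are \emph{infinitely many} $\alpha \in \overline{\FF_2}$ with $\ell_\alpha < \infty$. Here I would invoke the analysis of the dynamical system $\upvarphi_\alpha : t \mapsto t^4 + \alpha t^{-4}$ from \Cref{def:one-parameter-dynamical}: the condition $\ell_\alpha = r$ says $\upvarphi_\alpha^{(r)}(1) = 0$, which after clearing denominators is a nonzero polynomial condition on $\alpha$ (for instance $\ell_\alpha = 1$ forces $1 + \alpha = 0$, i.e. $\alpha = 1$; $\ell_\alpha = 2$ forces $\upvarphi_\alpha(\upvarphi_\alpha(1)) = 0$, a higher-degree equation in $\alpha$, and so on). Since $\overline{\FF_2}$ is algebraically closed, each such equation has roots, and one checks the solution sets for distinct values of $r$ are nonempty and that no single finite value of $r$ exhausts them — so the union over all $r$ is infinite. (This is exactly the sort of arithmetic-dynamical fact the introduction promises to "elucidate in later sections," e.g. \Cref{sec:density}; I would reference that discussion for the detailed count, or alternatively just note that already the finitely many roots of each successive escape-time polynomial, as $r \to \infty$, produce infinitely many distinct $\alpha$.) Combining: the set $\{\alpha : f \notin I_\alpha^* \text{ in } R_\alpha\}$ contains the infinite set $\{\alpha : \ell_\alpha < \infty\}$, which proves the theorem.

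\begin{proof}
Let $\alpha \in \overline{\FF_2}$ have finite escape time $\ell := \ell_\alpha < \infty$ and put $Q = 4^\ell/2 = 2^{2\ell - 1}$. By \cite[Theorem 1.6]{BM10}, the element $xy$ is a test element for tight closure in $R_\alpha$, so $f \in I_\alpha^*$ if and only if $(xy) f^{2^e} \in I_\alpha^{[2^e]}$ for all $e \geq 1$. Taking $e = 2\ell - 1$, the failure of this containment is equivalent to $v = (xy) f^Q \notin h_\alpha \hO$, which is \Cref{cor:noncontainment}. Hence $f \notin I_\alpha^*$ in $R_\alpha$ for every $\alpha$ of finite escape time.

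It remains to observe that there are infinitely many such $\alpha$. For each $r \geq 1$, the condition $\ell_\alpha = r$ amounts to $\upvarphi_\alpha^{(r)}(1) = 0$; clearing denominators, this is the vanishing of a nonzero polynomial in $\alpha$ over $\FF_2$ whose solution set in the algebraically closed field $\overline{\FF_2}$ is nonempty, and by the discussion in \Cref{sec:density} these solution sets are not all contained in any finite union. Therefore $\{\alpha \in \overline{\FF_2} : \ell_\alpha < \infty\}$ is infinite, and by the previous paragraph $f \notin I^*$ in $R_\alpha$ for all of these $\alpha$.
\end{proof}
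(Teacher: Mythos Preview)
Your proposal is correct and follows essentially the same approach as the paper: reduce $f \notin I_\alpha^*$ to \Cref{cor:noncontainment} via the test element $xy$ from \cite[Theorem 1.6]{BM10}, and then argue that infinitely many $\alpha$ have finite escape time. The only difference is cosmetic---the paper cites \cite[Theorem 5.10]{Mon98a} directly for the count (there are $8^{n-1}$ elements of escape time $n$), whereas you arrive at the same fact through the slightly more roundabout route of \Cref{sec:density}.
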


\begin{proof}
Let $0\neq \alpha\in \overline{\FF_2}$ such that $\ell_\alpha<\infty$. \cite[Theorem 5.10]{Mon98a} guarantees that there are infinitely many such $\alpha$ (in fact, that there are $8^{n-1}$ elements of escape time $n$ for each $n$). We know $v=(xy)f^Q \not\in h_\alpha\hO$. Then $v\neq 0 \in \hO$ so $(xy)f^Q\not\in I^{[Q]}$. By \cite[Theorem 1.6]{BM10} with $p=2$ and $\deg(h)=4$ we have that every nonzero element of degree at least two is a test element for tight closure. Hence $xy$ is a test element so it follows that $f\not\in I^*$ by \Cref{cor:noncontainment}.
\end{proof}

\section{Elements of infinite escape time \texorpdfstring{and $f\in (S^{-1}I)^*$}{}}
\label{sec:containment}
This section may be viewed as a modification of \cite[Section 2]{BM10} with $g$ replaced by $h$. We first recall the notion of the \emph{Hilbert--Kunz function} at the level of generality that it will be used.
\begin{definition}\label{def:ehk-line}
Let $L\supseteq K$ where $K$ is any algebraically closed field of characteristic $2$ and let $$R_\alpha=\frac{L[x,y,z]}{(h_\alpha:=\alpha z^4 + (x^2 + y z) (y^2 + x z))}$$ where $0\neq\alpha\in L$. The \emph{Hilbert--Kunz function of $R_\alpha$} is given by $$n\mapsto e_n(R_\alpha):=\dim_L \frac{L[x,y,z]}{(x^{2^n},y^{2^n},z^{2^n},h_\alpha)}.$$
\end{definition}

\begin{definition}\label{def:A}
Let $$R = \frac{\overline{\FF_2}[x, y, z, t]}{(h_t:=tx^2y^2+z^4+xyz^2+x^3z+y^3z)},$$ $I = (x^4, y^4, z^4)$, $f = y^3 z^3$, and $S = \overline{\FF_2}[t] \setminus \{ 0 \}$.
\end{definition}

We combine the contents of this section and the previous one to obtain the Main Theorem; the proof follows \emph{mutatis mutandis} as \cite[Theorem 2.5]{BM10}.

\begin{theorem}\label{thm-main-thm}
$f$ is in $(S^{-1}I)^*$ but $f\not\in S^{-1}(I^*)$, so tight closure of $I$ doesn't commute with localization at $S$.
\end{theorem}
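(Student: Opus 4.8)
The plan is to assemble the Main Theorem from the two halves of the paper via the criterion of \cite[Proposition 1.1]{BM10}, exactly as in \cite[Theorem 2.5]{BM10}. Recall that proposition says: if $f \notin I_\alpha^*$ for infinitely many $\alpha \in F$ (where $I_\alpha$ is the expansion of $I$ under $R \to R_\alpha$), and if $f \in (S^{-1}I)^*$, then $f \notin S^{-1}(I^*)$. So the proof is really a two-line bookkeeping exercise once both hypotheses are in hand.

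First I would establish hypothesis (\ref{intro-1}), the noncontainment at infinitely many fibers. This is precisely \Cref{thm:noncontainment-tight}: there are infinitely many $\alpha \in \overline{\FF_2}$ — namely all those of finite escape time $\ell_\alpha$, of which \cite[Theorem 5.10]{Mon98a} guarantees infinitely many — for which $f = y^3 z^3 \notin I_\alpha^*$ in $R_\alpha$. Then I would establish hypothesis (\ref{intro-2}), that $f \in (S^{-1}I)^*$; this is the content of Section \ref{sec:containment}, proved by the Hilbert--Kunz comparison argument adapted from \cite[Section 2]{BM10}, using the Hilbert--Kunz theory of Line--$S_4$ quartics from \cite{Mon98a}. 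With both inputs available, \cite[Proposition 1.1]{BM10} immediately yields $f \notin S^{-1}(I^*)$, and combined with $f \in (S^{-1}I)^*$ we conclude that $(\blank)^*$ does not commute with localization at $S$.

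One point of care: the criterion requires that $R$ be a finitely generated algebra over an excellent domain with $F[t] \subseteq R$ a polynomial subring over which $R$ is suitably nice, and that the special fibers $R_\alpha = R \otimes_{F[t]} F[t]/(t-\alpha)$ be the domains to which the noncontainment applies — all of which holds here since $h_t$ specializes to the irreducible $h_\alpha$ for $\alpha \neq 0$ and $R$ is a normal domain of dimension three. I would also note that the element $\alpha = 0$ is excluded from $S^{-1}$ considerations (as $0 \notin S$), so the relevant fibers are exactly those with $\alpha \in F \setminus \{0\}$, which is where \Cref{thm:noncontainment-tight} operates.

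I do not anticipate this final step itself being the obstacle — the genuine work lies upstream, in the injectivity of $\psi$ (the Sierpi\'nski tiling and the dynamical-system nullity argument) and in the Hilbert--Kunz containment of Section \ref{sec:containment}. The only thing to be careful about in writing this proof is to cite \cite[Proposition 1.1]{BM10} with the hypotheses matched correctly to our setting, and to invoke \Cref{thm:noncontainment-tight} and the containment result of Section \ref{sec:containment} by name rather than re-deriving anything. The proof will therefore read: by \Cref{thm:noncontainment-tight}, $f \notin I_\alpha^*$ for infinitely many $\alpha$; by the results of Section \ref{sec:containment}, $f \in (S^{-1}I)^*$; hence by \cite[Proposition 1.1]{BM10}, $f \notin S^{-1}(I^*)$, so tight closure does not commute with localization at $S$. $\qed$
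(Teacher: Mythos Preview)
Your proposal is correct and follows the paper's approach exactly: verify the two hypotheses of \cite[Proposition~1.1]{BM10} and conclude. The one structural point to flag is that there is no separately stated ``containment result of \Cref{sec:containment}'' for you to invoke---the proof that $f \in (S^{-1}I)^*$ is carried out \emph{inside} the proof of \Cref{thm-main-thm} itself (using that $t$ has infinite escape time in $\overline{\FF_2(t)}$, the Hilbert--Kunz formula $e_n(R_t) = 3\cdot 4^n - 4$ from \cite[Corollary~3.15]{Mon98a}, and a degree count showing $yf^Q$ lies in degree $6Q+1$ where the quotient vanishes); so your final write-up must supply that argument rather than cite it.
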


\begin{proof}
 One observes that enlarging the coefficient field to an algebraic closure does not affect $e_n(-)$. Monsky demonstrated in \cite[Theorem 5.10]{Mon98a} that all elements of finite escape time are algebraic over $\FF_2$. It follows that in the field $\overline{\FF_2(t)}$ the element $t$ has $\ell_t=\infty$ (recall that we only define escape time in algebraically closed fields). It then follows from \cite[Corollary 3.15]{Mon98a} that 
\begin{align}
    e_n(R_t)=3\cdot 4^n-4\label{ehk:line-s4}
\end{align}
for all $n\geq 1$. We identify $S^{-1}R$ with $\overline{\FF_2}(t)[x,y,z]/(h_t)=R_t$, which is trivial in degrees $6Q+1$ and larger by a combination\footnote{See the proof of \cite[Theorem 2.3]{BM10} for more details} of \Cref{ehk:line-s4}, \cite[Lemma 4.1]{Mon98b}, and graded artinian Gorenstein duality \cite[\textsection 21.2]{Eis13}. Since $yf^Q=y^{3Q+1}z^{3Q}$ has degree $6Q+1$, it follows that (the image of) $yf^Q=0$ in $\hO/(h_\alpha)$. This means that in $R_t/(x^{4Q},y^{4Q},z^{4Q})$, $yf^Q$ is also trivial and thus $yf^Q\in(x^{4Q}, y^{4Q}, z^{4Q}) = (x^{4}, y^{4}, z^{4})^{[Q]}$ for all $Q$. So $f\in (S^{-1}I)^*.$ The second statement follows from \Cref{thm:noncontainment-tight} and \cite[Proposition 1.1]{BM10}.
\end{proof}

\section{Density of finite escape time elements}\label{sec:density}
We include in this section a digression about elements $\alpha\in F=\overline{\FF_2}$ for which $\ell_\alpha<\infty$. Due to their seemingly irregular distribution throughout $F$, we sought a list of such examples to gain an intuition for the dynamical systems $\upvarphi_\alpha$.

\Cref{table} at the end of this article lists representative elements of finite escape time in the dynamical system $\upvarphi$ for escape time up to $\ell=427$ (with significant gaps). These are all of the escape times that the authors encountered with available computing resources. The $\beta$ have algebraic degree less than or equal to $15$, with the exception of the elements of escape time $5$ and $7$ which live in $\FF_{2^{74}}$ and $\FF_{2^{20}}$, respectively. Most of these were found via exhaustive search in Macaulay2 \cite{M2}, again with the exception of $\ell_\alpha=5,7$ where instead we take the following approach.

Let $G_n,H_n\in\FF_2[w]$ be defined inductively as
\begin{enumerate}
    \item $G_1=w+1, H_1=1$,
    \item $G_{n+1}=G_n^8+wH_n^8$, $H_{n+1}=G_n^4H_n^4$.
\end{enumerate}
Monsky observes in \cite[Theorem 5.10]{Mon98a} that after enlarging the coefficient field from $\FF_2$ to an algebraic closure, $\ell_\alpha=n$ if and only if $G_n(\alpha)=0$, where $G_n$ is a polynomial of degree $8^{n-1}$. To find elements with small escape time $n$, one may factor $G_n$ in $\FF_2[w]$, find the smallest degree irreducible factor (say $T(w)$ of degree $d$), and then pick a root of $T$ in $\FF_{2^d}$ to obtain the desired element. The following table provides a list of the degrees of the factors of $G_n$ for $1\leq n\leq 7$.

\begin{table}[H]
    \centering
    \begin{tabular}{l|l}
        $n$ & degrees of irreducible factors of $G_n$ \\\hline
        $1$ & $1$\\
        $2$ & $2,6$\\
        $3$ & $3,7,13,41$\\
        $4$ & $5,12,42,112,121,220$\\
        $5$ & $74, 4022$\\
        $6$ & $11,15,45,143,229,515,708,1704,3146,26252$\\
        $7$ & $20,76,1544,1640,84207,174657$
    \end{tabular}
    \caption{}
\end{table}

\noindent Factoring $G_8$ exceeded our computational limitations. We write one representative explicitly for $\ell_\beta=5$. We have $\beta=\gamma\in \FF_{2^{74}}$ where

\begin{align}
    \gamma:=\alpha^{68}+\alpha^{58}+\alpha^{56}+\alpha^{55}+\alpha^{54}+\alpha^{53}+\alpha^{52}+\alpha^{50}+\alpha^{47}+\alpha^{43}+\alpha^{41}+\alpha^{40}+\alpha^{36}+\alpha^{35}\label{gamma}\\
    +\alpha^{34}+\alpha^{33}+\alpha^{31}+\alpha^{29}+\alpha^{28}+\alpha
      ^{27}+\alpha^{26}+\alpha^{22}+\alpha^{20}+\alpha^{19}+\alpha^{18}+\alpha^{17}+\alpha^{3}+1\nonumber,
\end{align} where the relation defining the field $\FF_{2^{74}}$ is

\begin{align}
    \alpha^{74}=\alpha^{37}+\alpha^{36}+\alpha^{35}+\alpha^{34}+\alpha^{33}+\alpha^{32}+\alpha^{31}+\alpha^{30}+\alpha^{29}\label{l5relation}\\
    +\alpha^{28}+\alpha^{27}+\alpha^{26}+\alpha^{24}+\alpha^{21}+\alpha^{17}+\alpha^{16}+\alpha^{13}+\alpha^{12}+\alpha^{11}+\alpha^{8
      }+\alpha^{3}+1\nonumber.
\end{align}

\section{Questions}\label{sec:questions}
We conclude the article by mentioning two avenues worth exploring.
\subsection{Principal localizations}
The multiplicative set at which we localize in our example is the same as the one in \cite{BM10}, so in particular our work does not address the question (still open as of the present article) of whether the operations $(-)^*$ and $S^{-1}(-)$ commute when $S=\{1,f,f^2,\dots\}$. It was shown in \cite{Eps22} that this question is tantamount to asking whether the tight closure of a coherent ideal sheaf remains coherent.

\subsection{Prime characteristic \texorpdfstring{$p>2$}{greater than two}}
To the best of the authors' knowledge, it has not been proven as of the present article that tight closure may fail to commute with localization in rings of prime characteristic $p>2$. As mentioned in \cite[Remark 4.4]{BM10} and corroborated in \cite[Question 5.3]{Smi20}, there are rings in characteristic $3$ which appear natural to consider due to their parameter--dependent Hilbert--Kunz theories. However, new ideas are needed to circumvent the degree bound of \cite[Theorem 1.6]{BM10} and produce test elements in such rings.

\printbibliography

\newpage

\begin{table}[H]
    \begin{center}
    \resizebox{.72\linewidth}{!}{
    \begin{tabular}{l|l|r}
$\beta$ & Relation & $\ell_\beta$ \\ \hline
$0$ & -- & $\infty$ \\
$1$ & -- & 1 \\
$\alpha$ & $\alpha^{2}=\alpha+1$ & 2 \\
$\alpha+1$ & $\alpha^{3}=\alpha+1$ & 3 \\
$\alpha+1$ & $\alpha^{5}=\alpha^{2}+1$ & 4 \\
$\gamma\in\mathbb{F}_{2^{74}}$ \hfill(\ref{gamma})&(\ref{l5relation}) & 5  \\
$\alpha^{7}+\alpha^{6}+\alpha^{5}+\alpha^{4}+\alpha^{3}+1$ & $\alpha^{11}=\alpha^{2}+1$ & 6 \\
$\alpha^3+\alpha\in\mathbb{F}_{2^{20}}$ & $\alpha^{20}=\alpha^{10}+\alpha^{9}+\alpha^{7}+\alpha^{6}+^{5}+\alpha^{4}+\alpha+1$& 7\\
$\alpha^{5}+\alpha^{4}+\alpha^{3}+\alpha^{2}$ & $\alpha^{10}=\alpha^{6}+\alpha^{5}+\alpha^{3}+\alpha^{2}+\alpha+1$ & 10 \\
$\alpha^{4}+\alpha^{3}+\alpha+1$ & $\alpha^{7}=\alpha+1$ & 12 \\
$\alpha^{5}+\alpha^{2}+1$ & $\alpha^{13}=\alpha^{4}+\alpha^{3}+\alpha+1$ & 13 \\
$\alpha^{7}+\alpha^{5}+\alpha^{2}+1$ & $\alpha^{13}=\alpha^{4}+\alpha^{3}+\alpha+1$ & 17 \\
$\alpha^{6}+\alpha^{5}+\alpha+1$ & $\alpha^{10}=\alpha^{6}+\alpha^{5}+\alpha^{3}+\alpha^{2}+\alpha+1$ & 21 \\
$\alpha^{7}+\alpha^{5}+\alpha^{4}+\alpha^{2}+1$ & $\alpha^{12}=\alpha^{7}+\alpha^{6}+\alpha^{5}+\alpha^{3}+\alpha+1$ & 23 \\
$\alpha^{4}+\alpha^{3}+1$ & $\alpha^{10}=\alpha^{6}+\alpha^{5}+\alpha^{3}+\alpha^{2}+\alpha+1$ & 25 \\
$\alpha^{7}+\alpha^{6}+\alpha^{5}+\alpha^{3}+\alpha^{2}+\alpha$ & $\alpha^{13}=\alpha^{4}+\alpha^{3}+\alpha+1$ & 27 \\
$\alpha^{10}+\alpha^{5}+\alpha^{4}+\alpha^{2}$ & $\alpha^{13}=\alpha^{4}+\alpha^{3}+\alpha+1$ & 28 \\
$\alpha^{9}+\alpha^{8}+\alpha^{7}+\alpha^{6}+\alpha^{2}+1$ & $\alpha^{12}=\alpha^{7}+\alpha^{6}+\alpha^{5}+\alpha^{3}+\alpha+1$ & 29 \\
$\alpha^{6}+\alpha^{5}+\alpha^{4}$ & $\alpha^{10}=\alpha^{6}+\alpha^{5}+\alpha^{3}+\alpha^{2}+\alpha+1$ & 31 \\
$\alpha^{10}+\alpha^{9}+\alpha^{4}+\alpha^{2}$ & $\alpha^{15}=\alpha^{5}+\alpha^{4}+\alpha^{2}+1$ & 33 \\
$\alpha^{8}+\alpha^{3}+1$ & $\alpha^{14}=\alpha^{7}+\alpha^{5}+\alpha^{3}+1$ & 34 \\
$\alpha^{10}+\alpha^{8}+\alpha^{4}+\alpha^{3}+\alpha^{2}+1$ & $\alpha^{15}=\alpha^{5}+\alpha^{4}+\alpha^{2}+1$ & 35 \\
$\alpha^{4}+\alpha^{3}+\alpha^{2}+1$ & $\alpha^{12}=\alpha^{7}+\alpha^{6}+\alpha^{5}+\alpha^{3}+\alpha+1$ & 37 \\
$\alpha^{10}+\alpha^{8}+\alpha^{6}+\alpha^{5}+\alpha^{4}+\alpha^{3}+\alpha^{2}+\alpha+1$ & $\alpha^{13}=\alpha^{4}+\alpha^{3}+\alpha+1$ & 39 \\
$\alpha^{9}+\alpha^{8}+\alpha^{7}+\alpha^{3}+\alpha^{2}+\alpha+1$ & $\alpha^{14}=\alpha^{7}+\alpha^{5}+\alpha^{3}+1$ & 40 \\
$\alpha^{8}+\alpha^{6}+\alpha$ & $\alpha^{14}=\alpha^{7}+\alpha^{5}+\alpha^{3}+1$ & 43 \\
$\alpha^{11}+\alpha^{9}+\alpha^{6}+\alpha^{5}+\alpha^{4}+\alpha^{2}+1$ & $\alpha^{15}=\alpha^{5}+\alpha^{4}+\alpha^{2}+1$ & 46 \\
$\alpha^{10}+\alpha^{9}+\alpha^{8}+\alpha^{7}+\alpha^{5}+\alpha^{4}+\alpha+1$ & $\alpha^{14}=\alpha^{7}+\alpha^{5}+\alpha^{3}+1$ & 47 \\
$\alpha^{3}$ & $\alpha^{10}=\alpha^{6}+\alpha^{5}+\alpha^{3}+\alpha^{2}+\alpha+1$ & 49 \\
$\alpha^{11}+\alpha^{9}+\alpha^{7}+\alpha^{4}+\alpha^{3}$ & $\alpha^{14}=\alpha^{7}+\alpha^{5}+\alpha^{3}+1$ & 53 \\
$\alpha^{8}+\alpha^{6}+\alpha^{4}+\alpha^{3}+1$ & $\alpha^{15}=\alpha^{5}+\alpha^{4}+\alpha^{2}+1$ & 54 \\
$\alpha^{7}+\alpha^{5}+\alpha^{4}+\alpha^{3}$ & $\alpha^{12}=\alpha^{7}+\alpha^{6}+\alpha^{5}+\alpha^{3}+\alpha+1$ & 59 \\
$\alpha^{10}+\alpha^{6}+\alpha+1$ & $\alpha^{14}=\alpha^{7}+\alpha^{5}+\alpha^{3}+1$ & 63 \\
$\alpha^{10}+\alpha^{8}+\alpha^{7}+\alpha^{6}+\alpha^{4}+\alpha^{3}+1$ & $\alpha^{14}=\alpha^{7}+\alpha^{5}+\alpha^{3}+1$ & 66 \\
$\alpha^{7}+\alpha^{6}+\alpha^{4}+\alpha^{3}+\alpha^{2}+\alpha$ & $\alpha^{12}=\alpha^{7}+\alpha^{6}+\alpha^{5}+\alpha^{3}+\alpha+1$ & 67 \\
$\alpha^{5}+\alpha^{4}$ & $\alpha^{12}=\alpha^{7}+\alpha^{6}+\alpha^{5}+\alpha^{3}+\alpha+1$ & 74 \\
$\alpha^{7}$ & $\alpha^{10}=\alpha^{6}+\alpha^{5}+\alpha^{3}+\alpha^{2}+\alpha+1$ & 76 \\
$\alpha^{12}+\alpha^{8}+\alpha^{6}+\alpha^{4}+\alpha^{3}+\alpha^{2}+\alpha+1$ & $\alpha^{15}=\alpha^{5}+\alpha^{4}+\alpha^{2}+1$ & 77 \\
$\alpha^{4}+\alpha^{3}+1$ & $\alpha^{12}=\alpha^{7}+\alpha^{6}+\alpha^{5}+\alpha^{3}+\alpha+1$ & 98 \\
$\alpha^{10}+\alpha^{9}+\alpha^{8}+\alpha^{6}+\alpha^{5}+\alpha^{4}+\alpha+1$ & $\alpha^{13}=\alpha^{4}+\alpha^{3}+\alpha+1$ & 104 \\
$\alpha^{9}+\alpha^{7}+\alpha^{6}+\alpha^{3}+\alpha^{2}+1$ & $\alpha^{13}=\alpha^{4}+\alpha^{3}+\alpha+1$ & 116 \\
$\alpha^{10}+\alpha^{9}+\alpha^{8}+\alpha^{7}+\alpha^{5}+\alpha^{3}+\alpha+1$ & $\alpha^{15}=\alpha^{5}+\alpha^{4}+\alpha^{2}+1$ & 128 \\
$\alpha^{2}+\alpha$ & $\alpha^{13}=\alpha^{4}+\alpha^{3}+\alpha+1$ & 130 \\
$\alpha^{5}+\alpha^{4}+\alpha^{3}+\alpha$ & $\alpha^{15}=\alpha^{5}+\alpha^{4}+\alpha^{2}+1$ & 131 \\
$\alpha^{9}+\alpha^{7}+\alpha^{5}+\alpha^{3}+\alpha$ & $\alpha^{11}=\alpha^{2}+1$ & 133 \\
$\alpha^{6}+\alpha^{5}+\alpha^{3}+\alpha^{2}+\alpha$ & $\alpha^{12}=\alpha^{7}+\alpha^{6}+\alpha^{5}+\alpha^{3}+\alpha+1$ & 141 \\
$\alpha^{9}+\alpha^{8}+\alpha^{6}+\alpha^{4}+\alpha^{2}+\alpha$ & $\alpha^{15}=\alpha^{5}+\alpha^{4}+\alpha^{2}+1$ & 144 \\
$\alpha^{10}+\alpha^{8}+\alpha^{7}+\alpha^{6}+\alpha^{5}+\alpha^{4}+\alpha$ & $\alpha^{14}=\alpha^{7}+\alpha^{5}+\alpha^{3}+1$ & 152 \\
$\alpha^{9}+\alpha^{8}+\alpha^{7}+\alpha^{6}+\alpha^{5}+\alpha^{4}+\alpha^{2}$ & $\alpha^{13}=\alpha^{4}+\alpha^{3}+\alpha+1$ & 157 \\
$\alpha^{10}+\alpha^{8}+\alpha^{7}+\alpha^{6}+\alpha^{3}$ & $\alpha^{14}=\alpha^{7}+\alpha^{5}+\alpha^{3}+1$ & 162 \\
$\alpha^{12}+\alpha^{10}+\alpha^{8}+\alpha^{6}+\alpha^{2}+\alpha+1$ & $\alpha^{15}=\alpha^{5}+\alpha^{4}+\alpha^{2}+1$ & 164 \\
$\alpha^{10}+\alpha^{8}+\alpha^{7}+\alpha^{5}+\alpha^{4}+\alpha^{3}+\alpha$ & $\alpha^{15}=\alpha^{5}+\alpha^{4}+\alpha^{2}+1$ & 169 \\
$\alpha^{8}+\alpha^{7}+\alpha^{4}+\alpha^{3}+\alpha^{2}+1$ & $\alpha^{15}=\alpha^{5}+\alpha^{4}+\alpha^{2}+1$ & 174 \\
$\alpha^{7}+\alpha^{5}+\alpha^{4}$ & $\alpha^{15}=\alpha^{5}+\alpha^{4}+\alpha^{2}+1$ & 176 \\
$\alpha^{9}+\alpha^{7}+\alpha^{6}+\alpha^{5}+\alpha^{2}+\alpha$ & $\alpha^{14}=\alpha^{7}+\alpha^{5}+\alpha^{3}+1$ & 188 \\
$\alpha^{8}+\alpha^{7}+\alpha^{4}+\alpha^{2}+\alpha$ & $\alpha^{15}=\alpha^{5}+\alpha^{4}+\alpha^{2}+1$ & 258 \\
$\alpha^{7}+\alpha^{3}$ & $\alpha^{15}=\alpha^{5}+\alpha^{4}+\alpha^{2}+1$ & 277 \\
$\alpha^{11}+\alpha^{8}+\alpha^{6}+\alpha^{4}$ & $\alpha^{14}=\alpha^{7}+\alpha^{5}+\alpha^{3}+1$ & 280 \\
$\alpha^{12}+\alpha^{6}+\alpha+1$ & $\alpha^{15}=\alpha^{5}+\alpha^{4}+\alpha^{2}+1$ & 323 \\
$\alpha^{11}+\alpha^{9}+\alpha^{7}+\alpha^{6}+\alpha^{5}+\alpha^{4}+\alpha^{3}$ & $\alpha^{15}=\alpha^{5}+\alpha^{4}+\alpha^{2}+1$ & 427\\
\end{tabular}}

    \caption{Some representatives of elements of finite escape time}
     \label{table}
    \end{center}
\end{table}

\end{document}